\documentclass[reqno]{amsart}
\usepackage{amssymb}
\usepackage{eucal}
\usepackage{amsmath}
\usepackage{amscd}
\usepackage[dvips]{color}
\usepackage{multicol}
\usepackage[all]{xy}           
\usepackage{graphicx}
\usepackage{color}
\usepackage{colordvi}
\usepackage{xspace}
\usepackage{bookmark}

\usepackage[active]{srcltx} 

\usepackage{hyperref}
\usepackage{lipsum}

\topmargin -.8cm \textheight 22.5cm \oddsidemargin 0cm
\evensidemargin -0cm \textwidth 16cm



\newcommand{\nc}{\newcommand}
\newcommand{\delete}[1]{}
\nc{\mfootnote}[1]{\footnote{#1}} 
\nc{\todo}[1]{\tred{To do:} #1}

\delete{
\nc{\mlabel}[1]{\label{#1}}  
\nc{\mcite}[1]{\cite{#1}}  
\nc{\mref}[1]{\ref{#1}}  
\nc{\mbibitem}[1]{\bibitem{#1}} 
}

\nc{\mlabel}[1]{\label{#1}  
{\hfill \hspace{1cm}{\bf{{\ }\hfill(#1)}}}}
\nc{\mcite}[1]{\cite{#1}{{\bf{{\ }(#1)}}}}  
\nc{\mref}[1]{\ref{#1}{{\bf{{\ }(#1)}}}}  
\nc{\mbibitem}[1]{\bibitem[\bf #1]{#1}} 

\newtheorem{theorem}{Theorem}[section]
\newtheorem{definition}[theorem]{Definition}
\newtheorem{lemma}[theorem]{Lemma}
\newtheorem{corollary}[theorem]{Corollary}
\newtheorem{prop-def}[theorem]{Proposition-Definition}

\newtheorem{example}[theorem]{Example}

\newtheorem{proposition}[theorem]{Proposition}

\nc{\tred}[1]{\textcolor{red}{#1}}
\nc{\tblue}[1]{\textcolor{blue}{#1}}
\nc{\tgreen}[1]{\textcolor{green}{#1}}
\nc{\tpurple}[1]{\textcolor{purple}{#1}}
\nc{\btred}[1]{\textcolor{red}{\bf #1}}
\nc{\btblue}[1]{\textcolor{blue}{\bf #1}}
\nc{\btgreen}[1]{\textcolor{green}{\bf #1}}
\nc{\btpurple}[1]{\textcolor{purple}{\bf #1}}

\nc{\li}[1]{\textcolor{red}{Xiaomin:#1}}
\nc{\cm}[1]{\textcolor{blue}{Chengming: #1}}


\nc{\twovec}[2]{\left(\begin{array}{c} #1 \\ #2\end{array} \right )}
\nc{\threevec}[3]{\left(\begin{array}{c} #1 \\ #2 \\ #3 \end{array}\right )}
\nc{\twomatrix}[4]{\left(\begin{array}{cc} #1 & #2\\ #3 & #4 \end{array} \right)}
\nc{\threematrix}[9]{{\left(\begin{matrix} #1 & #2 & #3\\ #4 & #5 & #6 \\ #7 & #8 & #9 \end{matrix} \right)}}
\nc{\twodet}[4]{\left|\begin{array}{cc} #1 & #2\\ #3 & #4 \end{array} \right|}

\nc{\rk}{\mathrm{r}}


\nc{\gensp}{V} 
\nc{\relsp}{\Lambda} 
\nc{\leafsp}{X}    
\nc{\treesp}{\overline{\calt}} 

\nc{\vin}{{\mathrm Vin}}    
\nc{\lin}{{\mathrm Lin}}    

\nc{\gop}{{\,\omega\,}}     
\nc{\gopb}{{\,\nu\,}}
\nc{\svec}[2]{{\tiny\left(\begin{matrix}#1\\
#2\end{matrix}\right)\,}}  
\nc{\ssvec}[2]{{\tiny\left(\begin{matrix}#1\\
#2\end{matrix}\right)\,}} 

\nc{\su}{\mathrm{Su}}
\nc{\tsu}{\mathrm{TSu}}
\nc{\TSu}{\mathrm{TSu}}
\nc{\eval}[1]{{#1}_{\big|D}}
\nc{\oto}{\leftrightarrow}

\nc{\oaset}{\mathbf{O}^{\rm alg}}
\nc{\omset}{\mathbf{O}^{\rm mod}}
\nc{\oamap}{\Phi^{\rm alg}}
\nc{\ommap}{\Phi^{\rm mod}}
\nc{\ioaset}{\mathbf{IO}^{\rm alg}}
\nc{\iomset}{\mathbf{IO}^{\rm mod}}
\nc{\ioamap}{\Psi^{\rm alg}}
\nc{\iommap}{\Psi^{\rm mod}}

\nc{\suc}{{successor}\xspace} \nc{\Suc}{{Successor}\xspace}
\nc{\sucs}{{successors}\xspace} \nc{\Sucs}{{Successors}\xspace}
\nc{\Tsuc}{{T-successor}\xspace}
\nc{\Tsucs}{{T-successors}\xspace} \nc{\Lsuc}{{L-successor}\xspace}
\nc{\Lsucs}{{L-successors}\xspace} \nc{\Rsuc}{{R-successor}\xspace}
\nc{\Rsucs}{{R-successors}\xspace}

\nc{\bia}{{$\mathcal{P}$-bimodule ${\bf k}$-algebra}\xspace}
\nc{\bias}{{$\mathcal{P}$-bimodule ${\bf k}$-algebras}\xspace}

\nc{\rmi}{{\mathrm{I}}}
\nc{\rmii}{{\mathrm{II}}}
\nc{\rmiii}{{\mathrm{III}}}

\nc{\pll}{\beta}
\nc{\plc}{\epsilon}

\nc{\ass}{{\mathit{Ass}}}
\nc{\lie}{{\mathit{Lie}}}
\nc{\comm}{{\mathit{Comm}}}
\nc{\dend}{{\mathit{Dend}}}
\nc{\zinb}{{\mathit{Zinb}}}
\nc{\tdend}{{\mathit{TDend}}}
\nc{\prelie}{{\mathit{preLie}}}
\nc{\postlie}{{\mathit{PostLie}}}
\nc{\quado}{{\mathit{Quad}}}
\nc{\octo}{{\mathit{Octo}}}
\nc{\ldend}{{\mathit{ldend}}}
\nc{\lquad}{{\mathit{LQuad}}}

 \nc{\adec}{\check{;}} \nc{\aop}{\alpha}
\nc{\dftimes}{\widetilde{\otimes}} \nc{\dfl}{\succ} \nc{\dfr}{\prec}
\nc{\dfc}{\circ} \nc{\dfb}{\bullet} \nc{\dft}{\star}
\nc{\dfcf}{{\mathbf k}} \nc{\apr}{\ast} \nc{\spr}{\cdot}
\nc{\twopr}{\circ} \nc{\tspr}{\star} \nc{\sempr}{\ast}
\nc{\disp}[1]{\displaystyle{#1}}
\nc{\bin}[2]{ (_{\stackrel{\scs{#1}}{\scs{#2}}})}  
\nc{\binc}[2]{ \left (\!\! \begin{array}{c} \scs{#1}\\
    \scs{#2} \end{array}\!\! \right )}  
\nc{\bincc}[2]{  \left ( {\scs{#1} \atop
    \vspace{-.5cm}\scs{#2}} \right )}  
\nc{\sarray}[2]{\begin{array}{c}#1 \vspace{.1cm}\\ \hline
    \vspace{-.35cm} \\ #2 \end{array}}
\nc{\bs}{\bar{S}} \nc{\dcup}{\stackrel{\bullet}{\cup}}
\nc{\dbigcup}{\stackrel{\bullet}{\bigcup}} \nc{\etree}{\big |}
\nc{\la}{\longrightarrow} \nc{\fe}{\'{e}} \nc{\rar}{\rightarrow}
\nc{\dar}{\downarrow} \nc{\dap}[1]{\downarrow
\rlap{$\scriptstyle{#1}$}} \nc{\uap}[1]{\uparrow
\rlap{$\scriptstyle{#1}$}} \nc{\defeq}{\stackrel{\rm def}{=}}
\nc{\dis}[1]{\displaystyle{#1}} \nc{\dotcup}{\,
\displaystyle{\bigcup^\bullet}\ } \nc{\sdotcup}{\tiny{
\displaystyle{\bigcup^\bullet}\ }} \nc{\hcm}{\ \hat{,}\ }
\nc{\hcirc}{\hat{\circ}} \nc{\hts}{\hat{\shpr}}
\nc{\lts}{\stackrel{\leftarrow}{\shpr}}
\nc{\rts}{\stackrel{\rightarrow}{\shpr}} \nc{\lleft}{[}
\nc{\lright}{]} \nc{\uni}[1]{\tilde{#1}} \nc{\wor}[1]{\check{#1}}
\nc{\free}[1]{\bar{#1}} \nc{\den}[1]{\check{#1}} \nc{\lrpa}{\wr}
\nc{\curlyl}{\left \{ \begin{array}{c} {} \\ {} \end{array}
    \right .  \!\!\!\!\!\!\!}
\nc{\curlyr}{ \!\!\!\!\!\!\!
    \left . \begin{array}{c} {} \\ {} \end{array}
    \right \} }
\nc{\leaf}{\ell}       
\nc{\longmid}{\left | \begin{array}{c} {} \\ {} \end{array}
    \right . \!\!\!\!\!\!\!}
\nc{\ot}{\otimes} \nc{\sot}{{\scriptstyle{\ot}}}
\nc{\otm}{\overline{\ot}}
\nc{\ora}[1]{\stackrel{#1}{\rar}}
\nc{\ola}[1]{\stackrel{#1}{\la}}
\nc{\pltree}{\calt^\pl}
\nc{\epltree}{\calt^{\pl,\NC}}
\nc{\rbpltree}{\calt^r}
\nc{\scs}[1]{\scriptstyle{#1}} \nc{\mrm}[1]{{\rm #1}}
\nc{\dirlim}{\displaystyle{\lim_{\longrightarrow}}\,}
\nc{\invlim}{\displaystyle{\lim_{\longleftarrow}}\,}
\nc{\mvp}{\vspace{0.5cm}} \nc{\svp}{\vspace{2cm}}
\nc{\vp}{\vspace{8cm}} \nc{\proofbegin}{\noindent{\bf Proof: }}
\nc{\proofend}{$\blacksquare$ \vspace{0.5cm}}
\nc{\freerbpl}{{F^{\mathrm RBPL}}}
\nc{\sha}{{\mbox{\cyr X}}}  
\nc{\ncsha}{{\mbox{\cyr X}^{\mathrm NC}}} \nc{\ncshao}{{\mbox{\cyr
X}^{\mathrm NC,\,0}}}
\nc{\shpr}{\diamond}    
\nc{\shprm}{\overline{\diamond}}    
\nc{\shpro}{\diamond^0}    
\nc{\shprr}{\diamond^r}     
\nc{\shpra}{\overline{\diamond}^r}
\nc{\shpru}{\check{\diamond}} \nc{\catpr}{\diamond_l}
\nc{\rcatpr}{\diamond_r} \nc{\lapr}{\diamond_a}
\nc{\sqcupm}{\ot}
\nc{\lepr}{\diamond_e} \nc{\vep}{\varepsilon} \nc{\labs}{\mid\!}
\nc{\rabs}{\!\mid} \nc{\hsha}{\widehat{\sha}}
\nc{\lsha}{\stackrel{\leftarrow}{\sha}}
\nc{\rsha}{\stackrel{\rightarrow}{\sha}} \nc{\lc}{\lfloor}
\nc{\rc}{\rfloor}
\nc{\tpr}{\sqcup}
\nc{\nctpr}{\vee}
\nc{\plpr}{\star}
\nc{\rbplpr}{\bar{\plpr}}
\nc{\sqmon}[1]{\langle #1\rangle}
\nc{\forest}{\calf}
\nc{\altx}{\Lambda_X} \nc{\vecT}{\vec{T}} \nc{\onetree}{\bullet}
\nc{\Ao}{\check{A}}
\nc{\seta}{\underline{\Ao}}
\nc{\deltaa}{\overline{\delta}}
\nc{\trho}{\tilde{\rho}}

\nc{\rpr}{\circ}
\nc{\dpr}{{\tiny\diamond}}
\nc{\rprpm}{{\rpr}}

\nc{\mmbox}[1]{\mbox{\ #1\ }} \nc{\ann}{\mrm{ann}}
\nc{\Aut}{\mrm{Aut}} \nc{\can}{\mrm{can}}
\nc{\twoalg}{{two-sided algebra}\xspace}
\nc{\colim}{\mrm{colim}}
\nc{\Cont}{\mrm{Cont}} \nc{\rchar}{\mrm{char}}
\nc{\cok}{\mrm{coker}} \nc{\dtf}{{R-{\rm tf}}} \nc{\dtor}{{R-{\rm
tor}}}

\nc{\depth}{{\mrm d}}
\nc{\Div}{{\mrm Div}} \nc{\End}{\mrm{End}} \nc{\Ext}{\mrm{Ext}}
\nc{\Fil}{\mrm{Fil}} \nc{\Frob}{\mrm{Frob}} \nc{\Gal}{\mrm{Gal}}
\nc{\GL}{\mrm{GL}} \nc{\Hom}{\mrm{Hom}} \nc{\hsr}{\mrm{H}}
\nc{\hpol}{\mrm{HP}} \nc{\id}{\mrm{id}} \nc{\im}{\mrm{im}}
\nc{\incl}{\mrm{incl}} \nc{\length}{\mrm{length}}
\nc{\LR}{\mrm{LR}} \nc{\mchar}{\rm char} \nc{\NC}{\mrm{NC}}
\nc{\mpart}{\mrm{part}} \nc{\pl}{\mrm{PL}}
\nc{\ql}{{\QQ_\ell}} \nc{\qp}{{\QQ_p}}
\nc{\rank}{\mrm{rank}} \nc{\rba}{\rm{RBA }} \nc{\rbas}{\rm{RBAs }}
\nc{\rbpl}{\mrm{RBPL}}
\nc{\rbw}{\rm{RBW }} \nc{\rbws}{\rm{RBWs }} \nc{\rcot}{\mrm{cot}}
\nc{\rest}{\rm{controlled}\xspace}
\nc{\rdef}{\mrm{def}} \nc{\rdiv}{{\rm div}} \nc{\rtf}{{\rm tf}}
\nc{\rtor}{{\rm tor}} \nc{\res}{\mrm{res}} \nc{\SL}{\mrm{SL}}
\nc{\Spec}{\mrm{Spec}} \nc{\tor}{\mrm{tor}} \nc{\Tr}{\mrm{Tr}}
\nc{\mtr}{\mrm{sk}}

\nc{\ab}{\mathbf{Ab}} \nc{\Alg}{\mathbf{Alg}}
\nc{\Algo}{\mathbf{Alg}^0} \nc{\Bax}{\mathbf{Bax}}
\nc{\Baxo}{\mathbf{Bax}^0} \nc{\RB}{\mathbf{RB}}
\nc{\RBo}{\mathbf{RB}^0} \nc{\BRB}{\mathbf{RB}}
\nc{\Dend}{\mathbf{DD}} \nc{\bfk}{{\bf k}} \nc{\bfone}{{\bf 1}}
\nc{\base}[1]{{a_{#1}}} \nc{\detail}{\marginpar{\bf More detail}
    \noindent{\bf Need more detail!}
    \svp}
\nc{\Diff}{\mathbf{Diff}} \nc{\gap}{\marginpar{\bf
Incomplete}\noindent{\bf Incomplete!!}
    \svp}
\nc{\FMod}{\mathbf{FMod}} \nc{\mset}{\mathbf{MSet}}
\nc{\rb}{\mathrm{RB}} \nc{\Int}{\mathbf{Int}}
\nc{\Mon}{\mathbf{Mon}}
\nc{\remarks}{\noindent{\bf Remarks: }}
\nc{\OS}{\mathbf{OS}} 
\nc{\Rep}{\mathbf{Rep}}
\nc{\Rings}{\mathbf{Rings}} \nc{\Sets}{\mathbf{Sets}}
\nc{\DT}{\mathbf{DT}}

\nc{\BA}{{\mathbb A}} \nc{\CC}{{\mathbb C}} \nc{\DD}{{\mathbb D}}
\nc{\EE}{{\mathbb E}} \nc{\FF}{{\mathbb F}} \nc{\GG}{{\mathbb G}}
\nc{\HH}{{\mathbb H}} \nc{\LL}{{\mathbb L}} \nc{\NN}{{\mathbb N}}
\nc{\QQ}{{\mathbb Q}} \nc{\RR}{{\mathbb R}} \nc{\BS}{{\mathbb{S}}} \nc{\TT}{{\mathbb T}}
\nc{\VV}{{\mathbb V}} \nc{\ZZ}{{\mathbb Z}}


\nc{\calao}{{\mathcal A}} \nc{\cala}{{\mathcal A}}
\nc{\calc}{{\mathcal C}} \nc{\cald}{{\mathcal D}}
\nc{\cale}{{\mathcal E}} \nc{\calf}{{\mathcal F}}
\nc{\calfr}{{{\mathcal F}^{\,r}}} \nc{\calfo}{{\mathcal F}^0}
\nc{\calfro}{{\mathcal F}^{\,r,0}} \nc{\oF}{\overline{F}}
\nc{\calg}{{\mathcal G}} \nc{\calh}{{\mathcal H}}
\nc{\cali}{{\mathcal I}} \nc{\calj}{{\mathcal J}}
\nc{\call}{{\mathcal L}} \nc{\calm}{{\mathcal M}}
\nc{\caln}{{\mathcal N}} \nc{\calo}{{\mathcal O}}
\nc{\calp}{{\mathcal P}} \nc{\calq}{{\mathcal Q}} \nc{\calr}{{\mathcal R}}
\nc{\calt}{{\mathcal T}} \nc{\caltr}{{\mathcal T}^{\,r}}
\nc{\calu}{{\mathcal U}} \nc{\calv}{{\mathcal V}}
\nc{\calw}{{\mathcal W}} \nc{\calx}{{\mathcal X}}
\nc{\CA}{\mathcal{A}}

\nc{\fraka}{{\mathfrak a}} \nc{\frakB}{{\mathfrak B}}
\nc{\frakb}{{\mathfrak b}} \nc{\frakd}{{\mathfrak d}}
\nc{\oD}{\overline{D}}
\nc{\frakF}{{\mathfrak F}} \nc{\frakg}{{\mathfrak g}}
\nc{\frakm}{{\mathfrak m}} \nc{\frakM}{{\mathfrak M}}
\nc{\frakMo}{{\mathfrak M}^0} \nc{\frakp}{{\mathfrak p}}
\nc{\frakS}{{\mathfrak S}} \nc{\frakSo}{{\mathfrak S}^0}
\nc{\fraks}{{\mathfrak s}} \nc{\os}{\overline{\fraks}}
\nc{\frakT}{{\mathfrak T}}
\nc{\oT}{\overline{T}}
\nc{\frakX}{{\mathfrak X}} \nc{\frakXo}{{\mathfrak X}^0}
\nc{\frakx}{{\mathbf x}}
\nc{\frakTx}{\frakT}      
\nc{\frakTa}{\frakT^a}        
\nc{\frakTxo}{\frakTx^0}   
\nc{\caltao}{\calt^{a,0}}   
\nc{\ox}{\overline{\frakx}} \nc{\fraky}{{\mathfrak y}}
\nc{\frakz}{{\mathfrak z}} \nc{\oX}{\overline{X}}

\font\cyr=wncyr10

\nc{\redtext}[1]{\textcolor{red}{#1}}



\makeatletter
\g@addto@macro{\endabstract}{\@setabstract}
\newcommand{\authorfootnotes}{\renewcommand\thefootnote{\@fnsymbol\c@footnote}}%
\makeatother
\begin{document}
\begin{center}
  \LARGE
\textbf{Biderivations and commutative post-Lie algebra structure on the Schr\"{o}dinger-Virasoro Lie algebra }

  \normalsize
  \authorfootnotes
Xiaomin Tang   \footnote{Corresponding author: {\it X. Tang. Email:} x.m.tang@163.com}
\par \bigskip

   \textsuperscript{1}Department of Mathematics, Heilongjiang University,
Harbin, 150080, P. R. China   \par

\end{center}


\begin{abstract}

In this paper, we characterize the biderivations of the Schr\"{o}dinger-Virasoro Lie algebra. We obtain a class of non-inner and non-skewsymmetric biderivations. As an application, we characterize the commutative post-Lie algebra structures on the Schr\"{o}dinger-Virasoro Lie algebra.

\vspace{2mm}

\noindent{\it Keywords:} biderivation, skewsymmetric, Schr\"{o}dinger-Virasoro Lie algebra, post-Lie algebra

\noindent{\it AMS subject classifications:} 17B05, 17B40, 17B65.

\end{abstract}

\setcounter{section}{0}
{\ }

 \baselineskip=20pt

\section{Introduction and  preliminary results}

The Schr\"{o}dinger-Virasoro Lie algebra is an infinite-dimensional Lie algebra that was introduced in \cite{Hen} in the context of non-equilibrium
statistical physics. It contains as subalgebras both the Lie algebra of invariance of the free Schr\"{o}dinger equation and the central charge-free Virasoro
algebra ${\rm{Vect}}(S^1)$. To be precise, for $\varepsilon\in\{0,\frac{1}{2}\}$, the Schr\"{o}dinger-Virasoro Lie algebra $\mathcal{SV}(\varepsilon)$ is a Lie algebra with the $\mathbb{C}$ basis
$$\{L_i, Y_j, M_i| i\in \mathbb{Z}, j\in \varepsilon+ \mathbb{Z}\}$$
and Lie brackets
\begin{eqnarray*}
&[L_m,L_n]=(m-n)L_{m+n},\\
&[L_m,Y_n]=(\frac{1}{2}m-n)Y_{m+n},\\
&[L_m,M_n]=-nM_{m+n},\\
&[Y_m,Y_n]=(m-n)M_{m+n},\\
&[Y_m,M_n]=[M_m,M_n]=0.
\end{eqnarray*}
The Lie algebra $\mathcal{SV}(\frac{1}{2})$ is called the original Schr\"{o}dinger-Virasoro Lie algebra, and $\mathcal{SV}(0)$ is called the twisted Schr\"{o}dinger-Virasoro Lie algebra. Recently, the theory of the structures and representations of both the original and twisted Schr\"{o}dinger-Virasoro Lie algebras has
been investigated in a series of studies. For instance, the Lie bialgebra structures, derivations, automorphisms, $2$-cocycles, vertex algebra representations and Whittaker modules were investigated in \cite{HanS,Lisu2008,Lisu2009,peib,R,tansb,Unte,Zhangtan}.  In particular, to determine the form of each commuting map on the twisted Schr\"{o}dinger-Virasoro Lie algebra, the authors of \cite{WD1} describe the skewsymmetric biderivations of $\mathcal{SV}(0)$. Now let us review some details regarding derivations and biderivations.

Let $(A,\cdot)$ be an (associative or non-associative) algebra on a field. A linear map $d: A \rightarrow A$ is called a derivation of $A$ if
$$d(x\cdot y) = d(x)\cdot y + x\cdot d(y),$$
for all $x, y\in A$. A bilinear map $f: A\times A \rightarrow A$ is called a biderivation of  $A$ if it
is a derivation with respect to both components, which indicates that
$$f(x\cdot y,z)= x\cdot f(y,z)+ f(x,z)\cdot y  \ \text{and} \ f(x,y\cdot z)=f(x,y)\cdot z+ y\cdot f(x,z)$$
for all $x,y,z\in A$. Note that $f$ is called skewsymmetric if $f(x,y)=-f(y,x)$ for all $x,y\in A$.

Biderivations are a subject of research in various areas \cite{Bre1995,Chen2016,Du2013,Gho2013,Hanw,tang2016,WD1,WD3,WD2}. In \cite{Bre1995}, Bre$\breve{s}$ar et al. showed that all biderivations on commutative prime rings are inner biderivations and determined the biderivations of semiprime rings. This theorem has proved to be useful in the study of commutating maps. More details regarding commuting maps, biderivations and their
generalizations can be found in the survey article \cite{Bre3}.

If $(A,\cdot)$ is an associative algebra, we further let $[x, y] = x\cdot y- y\cdot x$ be the commutator of the elements $x, y\in A$. Then, $(A,[,])$ forms a Lie algebra, which is called the compatible Lie algebra with $(A,\cdot)$.  It is not difficult to verify the fundamental truth that every derivation (resp., biderivation) of an associative algebra $(A,\cdot)$ naturally becomes a derivation (resp., biderivation) of the compatible Lie algebra $(A,[,])$ with $(A,\cdot)$.
Thus, the study of a derivation or biderivation of a Lie algebra should be more general than that of an associative algebra.
The notation of biderivations of Lie algebras was introduced in 2011 \cite{WD3}, well after the introduction of the notation of biderivations of associative algebras.  After \cite{WD3}, many authors began studying (super-)biderivaitons of some Lie (super-)algebras, such as \cite{Chen2016,Hanw,tang2016,WD1,WD2}.

For convenience, we now review the concept of a biderivation of a Lie algebra as
follows. For an arbitrary Lie algebra $(L,[,])$, we recall that a bilinear map $f : L\times L \rightarrow L$ is a
biderivation of $L$ if it is a derivation with respect to both components. To be more precise, one has
\begin{eqnarray}
f([x,y],z)=[x,f(y,z)]+[f(x,z),y], \label{2der}\\
f(x,[y,z])=[f(x,y),z]+[y,f(x,z)] \label{1der}
\end{eqnarray}
for all $x, y, z\in L$.
For a complex number $\lambda$, we define a bilinear map $f: L\times L\rightarrow L$ given by $f(x,y)=\lambda [x,y]$. Then, it is easy to verify that $f$
is a biderivation of $L$. We refer to such a biderivation as an inner biderivation.

The authors of \cite{WD1} prove that if a biderivation of $\mathcal{SV}(0)$ is skewsymmetric, then it has to be inner. We first give an example to show that there is  a non-inner and non-skewsymmetric biderivation of $\mathcal{SV}(\varepsilon)$.  Note that the Schr\"{o}dinger-Virasoro Lie algebra $\mathcal{SV}(\varepsilon)$ has the following decomposition of subspaces: $\mathcal{SV}(\epsilon)=\mathfrak{L}\oplus\mathfrak{Y}\oplus\mathfrak{M}$, where
$$
\mathfrak{L}=\bigoplus_{i\in \mathbb{Z}} \mathbb{C}L_i, \  \mathfrak{Y}=\bigoplus_{i\in \varepsilon+\mathbb{Z}} \mathbb{C}Y_i, \ \mathfrak{M}=\bigoplus_{i\in \mathbb{Z}} \mathbb{C}M_i.
$$
\begin{example} \label{lihai}
Suppose that  $f: \mathcal{SV}(\varepsilon)\times \mathcal{SV}(\varepsilon)\rightarrow \mathcal{SV}(\varepsilon)$ is a bilinear map determined by
$$
f(L_m,L_n)=2017M_{m+n+2016}
$$
for all $m,n\in \mathbb{Z}$, and $f(x,y)=0$ if either of $x, y$ is contained in $\mathfrak{Y}\cup\mathfrak{M}$.  It is easy to verify that
$f$ is a biderivation of $ \mathcal{SV}$. However, we find that the biderivation is non-inner and non-skewsymmetric.
\end{example}

Recall that a linear map $\phi: L\rightarrow L$ for a Lie algebra $L$ is called a derivation if it satisfies
$\phi([x,y])=[\phi(x),y]+[x,\phi(y)]
$
for all $x, y\in L$.
For $x\in L$, it is easy to see that $\phi_x:L\rightarrow L, y\mapsto {\rm ad} x(y)=[x,y], $ for all $y\in L$ is a derivation of $L$, which is called an inner derivation.

\begin{lemma}\cite{R}\label{innerLie}
Every derivation of $\mathcal{SV}(\varepsilon)$ is of the following form:
$$
{\rm ad} x+a D_1+b D_2 +cD_3
$$
for some $x\in \mathcal{SV}(\varepsilon)$ and $a,b,c\in \mathbb{C}$, where $D_i,i=1,2,3$ are outer derivations of $\mathcal{SV}(\varepsilon)$, which is defined by
\begin{eqnarray*}
&D_1(L_m)=M_m,\ D_1(Y_m)=D_1(M_m)=0,\\
&D_2(L_m)=mM_m,\ D_2(Y_m)=D_2(M_m)=0,\\
&D_3(L_m)=0,\ D_3(Y_m)=Y_m, \ D_3(M_m)=2M_m.
\end{eqnarray*}
for all $m\in \mathbb{Z}$.
\end{lemma}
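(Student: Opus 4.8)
The plan is to exploit the $\mathbb{Z}$- (resp.\ $\tfrac12\mathbb{Z}$-) grading carried by $\mathcal{SV}(\varepsilon)$, in which $L_m,M_m$ have degree $m$ and $Y_n$ has degree $n$, together with the fact that $\mathrm{ad}\,L_0$ acts diagonalizably: $[L_0,L_m]=-mL_m$, $[L_0,Y_n]=-nY_n$, $[L_0,M_m]=-mM_m$, so the degree-$d$ homogeneous subspace $\mathcal{G}_d$ is exactly the $(-d)$-eigenspace of $\mathrm{ad}\,L_0$. First I would decompose an arbitrary derivation $D$ into its homogeneous components $D=\sum_d D_d$, where $D_d(\mathcal{G}_e)\subseteq\mathcal{G}_{e+d}$. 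Since the bracket is graded, matching degrees on the two sides of the derivation identity shows that each $D_d$ is itself a derivation.

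The next step disposes of the nonzero-degree components. For any derivation $D$ and any $h$ one has $[D,\mathrm{ad}\,h]=\mathrm{ad}(D(h))$; taking $h=L_0$ and evaluating on $x\in\mathcal{G}_e$ gives, for a homogeneous $D_d$, the identity $d\,D_d(x)=[D_d(L_0),x]$ for all $x$. Hence whenever $d\neq0$ we get $D_d=\mathrm{ad}\!\big(\tfrac1d D_d(L_0)\big)$, so $D_d$ is inner, and moreover $D_d\neq0$ forces $D_d(L_0)\neq0$. Because $D(L_0)=\sum_d D_d(L_0)$ is a single element of $\mathcal{SV}(\varepsilon)$, hence a finite sum, only finitely many $D_d$ with $d\neq0$ are nonzero; their inner parts add up to $\mathrm{ad}\,x$ for a genuine element $x\in\mathcal{SV}(\varepsilon)$. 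Replacing $D$ by $D-\mathrm{ad}\,x$ thus reduces everything to a single degree-preserving derivation $D_0$.

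It remains to determine the degree-zero derivations. Writing $D_0(L_m)=p_mL_m+q_mM_m(+r_mY_m)$, $D_0(Y_n)=(\,\cdots\,)Y_n+(\,\cdots\,)$ and $D_0(M_m)=(\,\cdots\,)$ with unknown scalar coefficients (the optional $Y,L,M$ terms occur only when the corresponding basis vector shares the given degree, which depends on $\varepsilon$), I would substitute into the derivation identities coming from each bracket $[L_m,L_n],[L_m,Y_n],[L_m,M_n],[Y_m,Y_n]$. These yield linear recurrences: for instance $[L_m,L_n]$ forces $p_{m+n}=p_m+p_n$ while $[L_m,M_n]$ forces $t_{m+n}=p_m+t_n$ (where $D_0(M_k)=\cdots+t_kM_k$), and combining them makes $p_m$ linear in $m$ and pins down the remaining coefficients. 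Subtracting the degree-zero inner derivations $\mathrm{ad}(\lambda L_0)$ (and $\mathrm{ad}(\mu Y_0)$ in the case $\varepsilon=0$, noting that $\mathrm{ad}\,M_0=0$) normalizes away the diagonal freedom, and the three surviving free parameters are precisely the coefficients of $D_1,D_2,D_3$; a short check that none of $D_1,D_2,D_3$ is inner (no degree-zero element sends $L_m$ to a multiple of $M_m$) and that they are linearly independent completes the identification.

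The main obstacle is the degree-zero bookkeeping in the last step: one must solve the coupled recurrences carefully and, in particular, split into the cases $\varepsilon=\tfrac12$ and $\varepsilon=0$, since whether $L_m,Y_m,M_m$ coexist in a fixed degree --- and hence which cross-terms $D_0$ may produce --- differs between them. By contrast, the grading reduction and the ``nonzero degree $\Rightarrow$ inner'' argument are clean and essentially formal; the finiteness observation using $D(L_0)$ is the one subtle point there, as it is what guarantees the inner part is realized by an honest element $x\in\mathcal{SV}(\varepsilon)$ rather than by an infinite sum.
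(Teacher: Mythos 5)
Your proposal is correct, but note first that the paper never proves this lemma at all: it is imported wholesale from \cite{R}, where it is obtained as part of a cohomological study of the Schr\"odinger--Virasoro algebra (a computation of first adjoint cohomology exploiting its semidirect-product structure over the centerless Virasoro algebra). So your graded, elementary argument is a genuinely different route rather than a reconstruction of anything in the paper. Your reduction steps are sound: the pointwise-finite decomposition $D=\sum_d D_d$ into homogeneous components, each again a derivation; the identity $d\,D_d(x)=[D_d(L_0),x]$, which is the correct specialization of $[D,\mathrm{ad}\,h]=\mathrm{ad}(D(h))$; and the finiteness argument through the single element $D(L_0)$, which, as you rightly emphasize, is exactly what makes the inner part $\mathrm{ad}\,x$ an honest element of $\mathcal{SV}(\varepsilon)$ rather than an infinite sum. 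I also confirm that the degree-zero bookkeeping you sketch closes to the stated answer: for $\varepsilon=0$, writing $D_0(L_m)=p_mL_m+r_mY_m+q_mM_m$, $D_0(Y_n)=a_nL_n+b_nY_n+c_nM_n$, $D_0(M_m)=u_mL_m+v_mY_m+t_mM_m$, the graded derivation identities give $a=u=v=0$, then $p_m=\lambda m$ (absorbed by $\mathrm{ad}\,L_0$), the two-parameter solution $q_m=\alpha+\beta m$ of $(m-n)q_{m+n}=mq_m-nq_n$ (this is $aD_1+bD_2$), the pair $b_k=\lambda k+c'$, $t_k=\lambda k+2c'$ with the factor $2$ forced by $[Y_m,Y_n]=(m-n)M_{m+n}$ (this is $cD_3$), and the coupled solution $r_m=\beta'm$, $c_n=2\beta'n$, which is precisely a multiple of $\mathrm{ad}\,Y_0$ (absent when $\varepsilon=\tfrac12$, as you note). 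Two cautions for a full write-up: most of these recurrences hold only for nondegenerate indices (e.g.\ $p_{m+n}=p_m+p_n$ requires $m\neq n$, and $t_{m+n}=p_m+t_n$ requires $n\neq 0$), so the exceptional cases must be tracked; and your two sample recurrences alone do not settle the $Y$-direction --- one must also use the $M$-component of the $[L_m,Y_n]$ identity to couple $r_m$ with $c_n$, since the solution $L_m\mapsto\beta'mY_m$ of the $[L_m,L_n]$ identity taken in isolation would otherwise look like an extra outer derivation. What your approach buys is a self-contained, elementary verification; what the citation buys the paper is brevity and the additional cohomological information contained in \cite{R}.
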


Although we work under the complex number field $\mathbb{C}$ in this study, this field also works with any algebraically closed field of characteristic zero.
We first characterize the biderivation of $\mathcal{SV}(\varepsilon)$ without the skewsymmetry condition and then present an application for a commutative post-Lie algebra.

\section{ Formal calculus }
In this section, we prove some results of formal calculus, which will be useful in our main proof.

\begin{proposition} \label{prop1}
Suppose that $k_i^{(m)}$ and $h_i^{(m)}$ are numbers satisfying
\begin{eqnarray}
&&(i-n)k_i^{(m)}=(2m-n-i)h_{n-m+i}^{(n)}, \label{abcd1}
\end{eqnarray}
for all $m,n,i\in \mathbb{Z}$. Then, there is $\lambda\in \mathbb{C}$ such that
$
k_i^{(m)}=h_{i}^{(m)}=\delta_{m,i} \lambda, \ \forall m,i\in \mathbb{Z},
$
where $\delta_{m,i}$ is the Kronecker delta.
\end{proposition}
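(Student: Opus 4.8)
The plan is to extract from the single relation (\ref{abcd1}) the fact that both arrays $k_i^{(m)}$ and $h_i^{(m)}$ vanish off the diagonal $i=m$, and then to pin down their common diagonal value as one global constant $\lambda$. The whole argument rests on making judicious choices of the free index $n$ so as to annihilate one side of (\ref{abcd1}) at a time; this is legitimate precisely because $m,n,i$ range independently over the infinite set $\mathbb{Z}$.

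First I would specialize $n=i$. The left-hand side of (\ref{abcd1}) then carries the factor $i-n=0$ and so vanishes, while the right-hand side becomes $(2m-2i)h_{2i-m}^{(i)}$. Thus $2(m-i)h_{2i-m}^{(i)}=0$ for all $m,i$, and taking $m\neq i$ gives $h_{2i-m}^{(i)}=0$. As $m$ runs over all integers different from $i$, the subscript $j=2i-m$ runs over all integers different from $i$, so this says exactly that $h_j^{(i)}=0$ whenever $j\neq i$; that is, $h$ is diagonal.

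Next I would feed this back into (\ref{abcd1}) in full generality. The subscript on the $h$-term is $n-m+i$, and by the previous step $h_{n-m+i}^{(n)}$ can be nonzero only when $n-m+i=n$, i.e. when $i=m$. Hence for every $i\neq m$ the right-hand side of (\ref{abcd1}) vanishes for all $n$, forcing $(i-n)k_i^{(m)}=0$; choosing any $n\neq i$ yields $k_i^{(m)}=0$. So $k$ is diagonal as well, and it remains only to analyze the diagonal entries $k_m^{(m)}$ and $h_n^{(n)}$.

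For this I would set $i=m$ in (\ref{abcd1}), which gives $(m-n)k_m^{(m)}=(m-n)h_n^{(n)}$; for $n\neq m$ this reads $k_m^{(m)}=h_n^{(n)}$. Since $\mathbb{Z}$ is infinite, for any two indices $m_1,m_2$ I can pick an $n$ distinct from both and conclude $k_{m_1}^{(m_1)}=h_n^{(n)}=k_{m_2}^{(m_2)}$, so all diagonal $k$-entries coincide with a common value $\lambda\in\mathbb{C}$; the same relation then gives $h_n^{(n)}=\lambda$ for every $n$. Combining this with the two vanishing statements yields $k_i^{(m)}=h_i^{(m)}=\delta_{m,i}\lambda$, as claimed. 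There is no serious obstacle in this argument: the only point requiring care is the bookkeeping of how the free index $n$ is chosen at each stage, together with the use of the infinitude of $\mathbb{Z}$ in the final step, to guarantee that each specialization remains valid for all the remaining indices.
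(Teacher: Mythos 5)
Your proof is correct and follows essentially the same route as the paper's: specialize the free indices in (\ref{abcd1}) to kill one side at a time (your $n=i$ is the paper's $i=n$, and your $i=m$ step is identical), deduce that both arrays vanish off the diagonal, and then use the infinitude of $\mathbb{Z}$ to identify all diagonal entries with a single constant $\lambda$. The only cosmetic difference is that the paper gets $k_i^{(m)}=0$ for $i\neq m$ from the direct substitution $i=2m-n$, whereas you derive it by feeding the diagonality of $h$ back into the relation; both are equally valid one-line arguments.
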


\begin{proof}
For any $m,n$ with $m\neq n$, by taking $i= 2m-n, n,$ and $m $  in (\ref{abcd1}), respectively, we have
 \begin{equation}\label{kkk}
 k_{2m-n}^{(m)}=h_{2n-m}^{(n)}=0, \ k_{m}^{(m)}=h_{n}^{(n)}, \ \forall m, n\in \mathbb{Z} \ \text {with}\ m\neq n.
 \end{equation}
 Let $m, n$ run over all integers with $m\neq n$. We conclude based on (\ref{kkk}) that
 \begin{equation}\label{kkklll}
 k_{i}^{(m)}=0, h_{j}^{(n)}=0,  \forall i\neq m, j\neq n,
 \end{equation}
 and
\begin{equation}\label{kkkrrr}
k_{n}^{(n)}=h_{n}^{(n)}=\ h_{0}^{(0)},\ \forall n\in \mathbb{Z}.
\end{equation}
  Setting $\ h_{0}^{(0)}=\lambda$ completes the proof.
\end{proof}

\begin{proposition}\label{prop2}
 Suppose that $t_i^{(m)}$ and $g_i^{(m)}$ are numbers satisfying
\begin{eqnarray}
&&(i-\frac{n}{2})t_i^{(m)}=(\frac{3m}{2}-n-i)g_{n-m+i}^{(n)} \label{abcd2}
\end{eqnarray}
for all $m,n,i\in \mathbb{Z}$. Then,
$
t_i^{(m)}=g_{i}^{(m)}=0, \ \forall m,i\in \mathbb{Z}.
$
\end{proposition}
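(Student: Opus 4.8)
The plan is to follow the template of Proposition~\ref{prop1}: feed carefully chosen values of $i$ (and later of $m,n$) into the defining relation (\ref{abcd2}) so that one of the two linear coefficients vanishes, and then read off which of the $t_i^{(m)}$ and $g_i^{(m)}$ are forced to be zero. The essential new feature compared with Proposition~\ref{prop1} is that the left coefficient $i-\frac{n}{2}$ and the right coefficient $\frac{3m}{2}-n-i$ now carry half-integers, so a coefficient can vanish at an \emph{integer} index only for a suitable parity of $n$ or of $m$. Managing this parity bookkeeping is where the real work lies, and I expect it to be the main obstacle: the crude substitution only probes even superscripts, so it cannot directly reach the odd-superscript entries or the diagonal terms $g_{n/2}^{(n)}$ and $t_{m/2}^{(m)}$. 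It is precisely this parity mismatch that ultimately forbids a surviving free parameter $\lambda$, in contrast to Proposition~\ref{prop1}.

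First I would record the constraint coming from the left coefficient. Setting $i=\frac{n}{2}$ in (\ref{abcd2}) is legitimate only when $n$ is even, and it annihilates the left-hand side; the right-hand side then reads $\frac{3}{2}(m-n)\,g_{\frac{3n}{2}-m}^{(n)}=0$. Letting $m$ range over all integers with $m\neq n$, the index $\frac{3n}{2}-m$ sweeps out every integer except $\frac{n}{2}$, so I obtain the partial vanishing $g_j^{(n)}=0$ for every even $n$ and every $j\neq \frac{n}{2}$. This is the analogue of (\ref{kkklll}), but now only for even superscripts.

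Next I would show that \emph{all} the $t_i^{(m)}$ vanish, using only the partial information just obtained. Fix arbitrary $m,i$. Since there are infinitely many even integers, I can choose an even $n$ with $n\neq 2i$ and $n\neq 2(m-i)$. For such an $n$ the left coefficient $i-\frac{n}{2}$ is nonzero (because $n\neq 2i$), while on the right the index $n-m+i$ differs from $\frac{n}{2}$ (because $n\neq 2(m-i)$), so $g_{n-m+i}^{(n)}=0$ by the previous step. Hence $(i-\frac{n}{2})\,t_i^{(m)}=0$ forces $t_i^{(m)}=0$, and as $m,i$ were arbitrary this gives $t\equiv 0$. The point I would check most carefully is exactly this step: the abundance of admissible even $n$ lets me avoid simultaneously the two ``bad'' indices $2i$ and $2(m-i)$, thereby routing around the diagonal entirely.

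Finally, substituting $t\equiv 0$ back into (\ref{abcd2}) gives $(\frac{3m}{2}-n-i)\,g_{n-m+i}^{(n)}=0$ for all $m,n,i$. Given any target pair $n,j$, I would pick an \emph{odd} $m$ and set $i=j-n+m$; then the index $n-m+i$ equals $j$, while the coefficient simplifies to $\frac{m}{2}-j$, which is nonzero precisely because $m$ is odd. This yields $g_j^{(n)}=0$ for all $n,j$, and together with $t\equiv 0$ completes the proof.
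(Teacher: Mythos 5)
Your proof is correct, but it takes a genuinely different route from the paper's. The paper works entirely with integer substitutions in the spirit of Proposition~\ref{prop1}: it sets $n=0$ and $i=0$ to kill the off-diagonal entries ($t_i^{(m)}=0$ for $i\neq 0,m$, and likewise for $g$), then treats the diagonal via $i=m$, kills the surviving constants $g_0^{(0)}$ and $t_0^{(0)}$ with the ad hoc choices $m=i=2,\,n=1$ and $i=m=1,\,n=2$ (where the coefficient $\frac{3m}{2}-n-i$ happens to vanish), and finally needs a separate fourth step for the leftover entries $t_0^{(m)}$, $g_0^{(n)}$ with $m,n\neq 0$. Your argument instead exploits the half-integer coefficients directly: the substitution $i=\frac{n}{2}$ (even $n$) gives the partial vanishing of $g$, the abundance of even $n$ avoiding the two bad values $2i$ and $2(m-i)$ kills \emph{all} of $t$ in one stroke, and then the observation that $\frac{m}{2}-j$ is a nonzero half-integer for odd $m$ finishes $g$ with no case analysis at all. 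I checked the three steps: the index $\frac{3n}{2}-m$ does sweep $\mathbb{Z}\setminus\{\frac{n}{2}\}$ as $m$ ranges over $\mathbb{Z}\setminus\{n\}$; the two exclusions in your second step are exactly what is needed for the left coefficient and the right index simultaneously; and the computation $\frac{3m}{2}-n-(j-n+m)=\frac{m}{2}-j$ is right. Your version is more uniform (no separate treatment of diagonal or index-zero entries, no numerically special substitutions) and it isolates the structural reason why, unlike in Proposition~\ref{prop1}, no free parameter $\lambda$ can survive here; the paper's version stays parallel to the proof of Proposition~\ref{prop1}, which keeps the whole section stylistically homogeneous.
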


\begin{proof}

The proof is divided into the following steps.

{\it Step 1.} Let $n=0$ in (\ref{abcd2}). Then
\begin{equation}\label{jintian1}
it_i^{(m)}=(\frac{3m}{2}-i)g_{-m+i}^{(0)}.
\end{equation}
Let $i=0$ in the above equation. Then $g_{-m}^{(0)}=0$ for every $m\neq 0$. In particular, $g_{-m+i}^{(0)}=0$ if $-m+i\neq 0$. This, together with (\ref{jintian1}), yields $it_i^{(m)}=0$ for all $i\neq m$. Thus, $t_i^{(m)}=0$ for all $i\neq 0, m$. Similarly, by letting $n=0$ and $i=-n$ in (\ref{jintian1}), we deduce that $g_i^{(m)}=0$ for all $i\neq 0, m$.

{\it Step 2.}  Take $i=m$ in (\ref{abcd2}). We deduce
\begin{equation}\label{jintian2}
(m-\frac{n}{2})t_m^{(m)}=(\frac{m}{2}-n)g_{n}^{(n)}.
\end{equation}
By letting $n=0$ in (\ref{jintian2}), one obtains $mt_m^{(m)}=\frac{m}{2}g_{0}^{(0)}$. Thus, $t_m^{(m)}=\frac{1}{2}g_{0}^{(0)}$ for all $m\neq 0$.  Similarly,
by letting $m=0$ in (\ref{jintian2}), we have that $g_n^{(n)}=\frac{1}{2}t_{0}^{(0)}$ for all $n\neq 0$.

{\it Step 3.} Now let $m=i=2$ and $n=1$ in (\ref{abcd2}). Then, $t_{2}^{(2)}=0$. This, with Step 2, gives that $0=t_2^{(2)}=t_m^{(m)}=\frac{1}{2}g_{0}^{(0)}$ for all $m\neq 0$. Similarly, by letting $i=m=1$ and $n=2$ in (\ref{abcd2}), we have $g_{2}^{(2)}=0$. Again, we use Step 2 and deduce that  $0=g_n^{(n)}=\frac{1}{2}t_{0}^{(0)}$ for all $n\neq 0$.

{\it Step 4.} By Steps 1-3, to finish the proof, it is sufficient to prove that $t_0^{(n)}=g_0^{(n)}=0$ for all $n\neq 0$. In fact, by letting $i=0$ and $n=-m\neq 0$ in (\ref{abcd2}), we obtain $t_0^{(m)}=5g_{-2m}^{(-m)}$. Because $-2m\neq 0$, we see that $g_{-2m}^{(-m)}=0$ and, thus, $t_0^{(m)}=0$ for all $m\neq 0$.  Similarly, by letting $i=m-n$ and $m=-n\neq 0$ in (\ref{abcd2}), we deduce that $g_0^{(n)}=0$ for all $n\neq 0$. The proof is completed.
\end{proof}

\begin{proposition} \label{prop3}
Suppose that $s_i^{(m)}, e_i^{(m)}, i,m\in \mathbb{Z}$ are numbers and that $\rho_1, \rho_2, \theta_1, \theta_2$ are linear complex-valued functions on $\mathcal{SV}(\varepsilon)$ satisfying
\begin{eqnarray}
&&i s_{i}^{(m)}=-(n-m+i) e_{n-m+i}^{(n)},  \ {\text for } \ i\neq 0, m-n, \label{mingtian1}\\
&& \rho_1(L_m)+ n \rho_2(L_m) =-(n-m) e_{n-m}^{(n)}, \label{mingtian2}\\
&& \theta_1(L_n)+ m \theta_2(L_n) =(m-n) s_{m-n}^{(m)}\label{mingtian3}
\end{eqnarray}
for all $m,n,i\in \mathbb{Z}$. Then, there is a set of complex numbers $\Omega=\{\mu_i\in \mathbb{C}|i\in \mathbb{Z}\}$ such that
\begin{eqnarray}
&&s_{m+k}^{(m)}=-e_{m+k}^{(m)}=\frac{\mu_k}{m+k}, \ \forall k\in \mathbb{Z}\setminus\{ -m\},\label{jintian4} \\
&&\rho_1(L_m)=\theta_1(L_m)=\mu_{-m}, \label{jintian5} \\
&&\rho_2(L_m)=\theta_2(L_m)=0. \label{jintian6}
\end{eqnarray}
\end{proposition}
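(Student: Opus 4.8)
The plan is to extract all the information from the ``bulk'' relation (\ref{mingtian1}) by a change of summation index, and then feed the resulting closed forms of $s$ and $e$ into the two boundary relations (\ref{mingtian2}) and (\ref{mingtian3}).

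First I would rewrite (\ref{mingtian1}) in separated form. Substituting $i=m+k$, so that the shifted index $n-m+i$ becomes $n+k$, the restriction $i\neq 0,m-n$ turns into $k\neq -m,-n$, and (\ref{mingtian1}) reads
$$(m+k)\,s_{m+k}^{(m)}=-(n+k)\,e_{n+k}^{(n)},\qquad k\neq -m,-n.$$
The left-hand side depends only on $(m,k)$ and the right-hand side only on $(n,k)$. Fixing $k$ and letting $m,n$ range over $\mathbb{Z}\setminus\{-k\}$, which is infinite so that there is always room to compare, I conclude that both sides are independent of $m$ and of $n$ respectively and equal a common constant, which I name $\mu_k$. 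This yields $s_{m+k}^{(m)}=\mu_k/(m+k)$ for $m+k\neq 0$ and $e_{n+k}^{(n)}=-\mu_k/(n+k)$ for $n+k\neq 0$, which is precisely (\ref{jintian4}).

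Next I would substitute these closed forms into the boundary relations. The term $e_{n-m}^{(n)}$ in (\ref{mingtian2}) is the case $k=-m$ of the formula for $e$, so $e_{n-m}^{(n)}=-\mu_{-m}/(n-m)$ for $n\neq m$, whence $-(n-m)e_{n-m}^{(n)}=\mu_{-m}$. Thus (\ref{mingtian2}) becomes $\rho_1(L_m)+n\rho_2(L_m)=\mu_{-m}$ for all $n\neq m$. The left side is affine in the free variable $n$ while the right side is constant, so comparing two distinct admissible values of $n$ forces $\rho_2(L_m)=0$ and then $\rho_1(L_m)=\mu_{-m}$. An identical computation, using $s_{m-n}^{(m)}=\mu_{-n}/(m-n)$ (the case $k=-n$) in (\ref{mingtian3}), gives $\theta_1(L_n)+m\theta_2(L_n)=\mu_{-n}$ for all $m\neq n$, hence $\theta_2(L_n)=0$ and $\theta_1(L_n)=\mu_{-n}$. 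Relabelling the index yields (\ref{jintian5}) and (\ref{jintian6}).

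The argument is essentially a repeated separation of variables, so there is no single hard step; the only thing requiring care is the bookkeeping of the excluded indices. In particular I must verify that $\mu_k$ is well defined for every $k$, that is, that for each fixed $k$ there remain infinitely many admissible $m$ (and $n$) with which to run the comparison, and that the quantities $s_0^{(m)}$ and $e_0^{(m)}$ (the case $m+k=0$, on which (\ref{mingtian1}) gives no information) are correctly left undetermined, consistently with the exclusion $k\neq -m$ in (\ref{jintian4}). Everything else reduces to the elementary fact that an affine function of an integer variable which is constant on an infinite set must have vanishing linear coefficient.
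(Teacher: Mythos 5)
Your proposal is correct and follows essentially the same route as the paper: the substitution $i=m+k$ to get $(m+k)s_{m+k}^{(m)}=-(n+k)e_{n+k}^{(n)}$, the observation that the two sides separate in $m$ and $n$ so their common value defines $\mu_k$ (the paper pins this down by the specific choice $m=n=1-k$, which is the same idea), and then the affine-in-$n$ comparison in the boundary relations to force $\rho_2=\theta_2=0$ and $\rho_1(L_m)=\theta_1(L_m)=\mu_{-m}$. Your explicit tracking of the excluded indices (e.g.\ using \eqref{mingtian2} only for $n\neq m$) is, if anything, slightly more careful than the paper's write-up.
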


\begin{proof}
Let $i=m+k$ in (\ref{mingtian1}) with $k\neq -m$ and $k\neq -n$. Then,
\begin{equation}\label{mingtian4}
(m+k)s_{m+k}^{(m)}=-(n+k)e_{n+k}^{(n)}.
\end{equation}
Let $m=n=1-k$ in (\ref{mingtian4}). Then $s_{1}^{(1-k)}=-e_{1}^{(1-k)}$. Denote $\mu_k=s_{1}^{(1-k)}$.
By using (\ref{mingtian4}) with $n=1-k$, we deduce that $s_{m+k}^{(m)}=\frac{\mu_k}{m+k}$ for all $k\in \mathbb{Z}\setminus\{ -m\}$.
Similarly, by using (\ref{mingtian4}) with $m=1-k$, we see that $e_{n+k}^{(n)}=-\frac{\mu_k}{n+k}$ for all $k\in \mathbb{Z}\setminus\{ -n\}$.
This proves that (\ref{jintian4}) holds.  Note that $(n-m)e_{n-m}^{(n)}=-\mu_{-m}$. It follows from (\ref{mingtian2}) that
$$
\rho_1(L_m)+ n \rho_2(L_m) =\mu_{-m}.
$$
This indicates that $\rho_1(L_m)=\mu_{-m}$ and $\rho_2(L_m)=0$. Similarly, by  $(m-n)s_{m-n}^{(m)}=\mu_{-n}$ and (\ref{mingtian3}), we deduce that
$\theta_1(L_n)=\mu_{-n}$ and $\theta_2(L_n)=0$. The proof is completed.
\end{proof}

\begin{proposition}\label{prop4}
Suppose that $q_i^{(n)}$ are numbers satisfying
\begin{equation}\label{jintian3}
(\frac{m}{2}-i)q_i^{(n)}=0
\end{equation}
for all $m,n,i\in \mathbb{Z}$ with $i\neq n$ and $i\neq m-n$. Then,
$q_i^{(n)}=0$ for all $i,n\in \mathbb{Z}$ with $i\neq n$.
\end{proposition}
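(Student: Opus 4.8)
The plan is to fix a single pair $i,n\in\mathbb{Z}$ with $i\neq n$ and to produce just one admissible instance of (\ref{jintian3}) whose scalar coefficient $\frac{m}{2}-i$ is nonzero; since that instance reads $(\frac{m}{2}-i)q_i^{(n)}=0$, dividing through by the nonzero coefficient immediately yields $q_i^{(n)}=0$. The whole content of the proof is therefore the observation that, for fixed $i\neq n$, the free parameter $m$ can be chosen so that (\ref{jintian3}) both applies and has a nonvanishing coefficient.

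Concretely, I would first read off the restrictions on $m$. The hypothesis requires $i\neq n$, which is already part of what we are assuming, and $i\neq m-n$, i.e. $m\neq i+n$. Separately, the coefficient $\frac{m}{2}-i$ vanishes precisely when $m=2i$. Hence the only integers $m$ we must rule out are the two values $m=i+n$ and $m=2i$. Since $\mathbb{Z}$ is infinite, I can pick an integer $m\notin\{i+n,\,2i\}$; for such an $m$ the pair $(i,n)$ satisfies the side conditions $i\neq n$ and $i\neq m-n$, so (\ref{jintian3}) is valid, and because $m\neq 2i$ the factor $\frac{m}{2}-i$ is nonzero. Therefore $q_i^{(n)}=0$, and as $i\neq n$ was arbitrary this is exactly the claimed conclusion.

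I do not expect any genuine obstacle here: the argument is a one-line availability check that a suitable $m$ exists, and it goes through uniformly whether or not the two forbidden values $i+n$ and $2i$ happen to coincide. The only point worth stating carefully is that we are allowed to choose $m$ independently of $i$ and $n$ (the equation is quantified over all $m$), so no compatibility between the two constraints is needed beyond discarding at most two integers.
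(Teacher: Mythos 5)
Your proof is correct and takes essentially the same approach as the paper: both arguments rest on the free choice of $m$, which only needs to avoid finitely many bad integer values, to obtain an instance of (\ref{jintian3}) with nonvanishing coefficient. The only cosmetic difference is that the paper selects two distinct admissible values $m_1\neq m_2$ with $i\notin\{m_1-n,\,m_2-n\}$ and notes that $\frac{m_1}{2}-i$ and $\frac{m_2}{2}-i$ cannot both vanish, whereas you pick a single $m\notin\{i+n,\,2i\}$ directly.
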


\begin{proof}
For any fixed $i,n\in \mathbb{Z}$ with $i\neq n$, it is easy to see that there exist $m_1,m_2\in \mathbb{Z}$ satisfying
$$
m_1\neq m_2,\ \ i\notin \{m_1-n,  m_2-n\}.
$$
Let $m=m_1$ and $m=m_2$ in (\ref{jintian3}). We obtain, respectively,
\begin{equation*}
(\frac{m_1}{2}-i)q_i^{(n)}=0 \ \text{and}\ (\frac{m_2}{2}-i)q_i^{(n)}=0.
\end{equation*}
It follows from $m_1\neq m_2$ that $q_i^{(n)}=0$. The proof is completed.
\end{proof}

\section{Biderivation of Schr\"{o}dinger-Virasoro Lie algebra}

In this section, we assume that $f$ is a biderivation of Schr\"{o}dinger-Virasoro Lie algebra $\mathcal{SV}(\varepsilon)$.
\begin{lemma}\label{phipsi}
There are two linear maps $\phi$ and $\psi$ from $\mathcal{SV}(\varepsilon)$ into itself such that
\begin{eqnarray}\label{lr}
f(x,y)&=&\rho_1(x) D_1(y)+\rho_2(x)D_2(y)+\rho_3(x)D_3(y)+[\phi(x), y],\\
&=&\theta_1(y) D_1(x)+\theta_2(y)D_2(x)+\theta_3(y)D_3(x)+[x, \psi(y)]
\end{eqnarray}
for all $x,y\in \mathcal{SV}$, where $\rho_i(x), \theta_i(x)$ are linear complex-valued functions on $\mathcal{SV}(\varepsilon)$ and $D_i$ is given by Lemma \ref{innerLie}, for  $i=1,2,3$.
\end{lemma}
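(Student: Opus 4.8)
The plan is to exploit the defining property of a biderivation, namely that $f$ is a derivation in each of its two arguments separately, and then to feed this into the classification of derivations in Lemma \ref{innerLie}. Fixing $x\in\mathcal{SV}(\varepsilon)$ and reading identity (\ref{1der}) as $e_x([y,z])=[e_x(y),z]+[y,e_x(z)]$ with $e_x(y):=f(x,y)$, one sees that $e_x$ is a derivation of $\mathcal{SV}(\varepsilon)$ for every fixed $x$; symmetrically, identity (\ref{2der}) shows that $d_y:=f(\cdot,y)$ is a derivation for every fixed $y$. By Lemma \ref{innerLie}, each $e_x$ can be written as $\mathrm{ad}\,w_x+a_xD_1+b_xD_2+c_xD_3$ and each $d_y$ as $\mathrm{ad}\,v_y+a'_yD_1+b'_yD_2+c'_yD_3$. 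Setting $\phi(x)=w_x$, $\psi(y)=-v_y$, taking $\rho_i,\theta_i$ to be the corresponding coefficients, and using $[w_x,y]=[\phi(x),y]$ and $[v_y,x]=[x,\psi(y)]$ reproduces the two displayed formulas, provided these data can be chosen to be linear in their arguments.

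The crucial preliminary is therefore to pin down the ambiguity in the decomposition supplied by Lemma \ref{innerLie}. I would show that $D_1,D_2,D_3$ are linearly independent modulo inner derivations and that $\mathrm{ad}\,x=0$ forces $x$ to lie in the center $\mathbb{C}M_0$. Both facts come from a direct computation on the generators: one checks first that $M_0$ is central and that $[L_m,z]=0$ for all $m$ already forces $z\in\mathbb{C}M_0$ (comparing coefficients of $L_{m+i}$, $Y_{m+j}$, $M_{m+i}$ kills the $\mathfrak{L},\mathfrak{Y}$ parts and all $M_i$ with $i\neq0$), so the center equals $\mathbb{C}M_0$ exactly; then, assuming $aD_1+bD_2+cD_3=\mathrm{ad}\,x$, evaluating on $Y_m$ yields $c=0$ and the vanishing of the $\mathfrak{L}$-part of $x$, after which evaluating on $L_m$ yields the vanishing of the $\mathfrak{Y}$-part together with $a=b=0$, leaving $x\in\mathbb{C}M_0$. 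Consequently the coefficients $a_x,b_x,c_x$ (respectively $a'_y,b'_y,c'_y$) are uniquely determined by $e_x$ (respectively $d_y$), and the inner summand $\mathrm{ad}\,w_x$ is determined modulo the center.

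With this uniqueness, linearity is essentially automatic. Define $\rho_i(x)$ to be the unique $D_i$-coefficient of the derivation $e_x$; since $f$ is bilinear we have $e_{\alpha x+\beta x'}=\alpha e_x+\beta e_{x'}$, and uniqueness of the coefficients forces $\rho_i(\alpha x+\beta x')=\alpha\rho_i(x)+\beta\rho_i(x')$, so each $\rho_i$ is a linear functional; the same argument applied to $d_y$ gives linearity of the $\theta_i$. For $\phi$, the assignment $x\mapsto\big(f(x,\cdot)-\sum_i\rho_i(x)D_i\big)$ is a linear map from $\mathcal{SV}(\varepsilon)$ into the space of inner derivations (its image is inner precisely because this difference of derivations has vanishing $D_i$-coefficients, by uniqueness). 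Composing with the linear isomorphism $\mathcal{SV}(\varepsilon)/\mathbb{C}M_0\cong\mathrm{Inn}(\mathcal{SV}(\varepsilon))$, $\bar w\mapsto\mathrm{ad}\,w$, and lifting through a fixed linear section (for instance the one landing in the span of all basis vectors other than $M_0$) produces a genuine linear map $\phi$ satisfying $[\phi(x),y]=f(x,y)-\sum_i\rho_i(x)D_i(y)$; the construction of $\psi$ from $d_y$ is identical up to the sign $\psi(y)=-v_y$.

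The derivation identities and the bilinearity bookkeeping are routine. The only genuine obstacle is the independence-and-uniqueness computation of the second paragraph: everything downstream relies on knowing that the center of $\mathcal{SV}(\varepsilon)$ is precisely $\mathbb{C}M_0$ and that $D_1,D_2,D_3$ stay linearly independent modulo $\mathrm{ad}\,\mathcal{SV}(\varepsilon)$, and I expect that step to be the crux of the argument.
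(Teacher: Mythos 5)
Your proposal is correct and follows the same route as the paper: fix one argument, observe via (\ref{1der}) and (\ref{2der}) that $f(x,\cdot)$ and $f(\cdot,y)$ are derivations of $\mathcal{SV}(\varepsilon)$, and apply Lemma \ref{innerLie} to each to obtain the two decompositions (with the sign adjustment $\psi(y)=-v_y$). The uniqueness analysis you insert --- the center is exactly $\mathbb{C}M_0$, the outer derivations $D_1,D_2,D_3$ are independent modulo inner derivations, and $\phi$ is obtained through a linear section of $\mathcal{SV}(\varepsilon)\to\mathcal{SV}(\varepsilon)/\mathbb{C}M_0$ --- is precisely the justification for the paper's terser assertion that ``because $f$ is bilinear, $\rho_1,\rho_2,\rho_3$ and $\phi$ are linear,'' so you have supplied detail the paper leaves implicit rather than diverged from its method.
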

\begin{proof}
For the biderivation $f$ of $\mathcal{SV}(\varepsilon)$ and a fixed element $x\in \mathcal{SV}(\varepsilon)$, we define a map $\phi_x: \mathcal{SV}(\varepsilon)\rightarrow \mathcal{SV}(\varepsilon)$ given by $\phi_x(y)=f(x,y)$. Then, we know from (\ref{1der}) that $\phi_x$ is a derivation of $\mathcal{SV}(\varepsilon)$. By Lemma \ref{innerLie}, there exist some complex-valued functions $\rho_1, \rho_2, \rho_3$ on $\mathcal{SV}(\varepsilon)$ and a map $\phi$ from $\mathcal{SV}(\varepsilon)$ into itself such that $\phi_x =\rho_1(x) D_1+\rho_2(x) D_2 +\rho_3 (x) D_3+{\rm ad}\phi(x)$.
Namely, $f(x,y)=\rho_1(x) D_1(y)+\rho_2(x)D_2(y)+\rho_3(x) D_3(y)+[\phi(x), y]$. Because $f$ is bilinear, one has that $\rho_1, \rho_2, \rho_3$ and $\phi$ are linear. Similarly, we define a map $\psi_z$ from $\mathcal{SV}(\varepsilon)$ into itself given by $\psi_z(y)=f(y, z)$ for all $y\in \mathcal{SV}(\varepsilon)$. We can obtain linear complex-valued functions $\theta_1,\theta_2,\theta_3$ on $\mathcal{SV}(\varepsilon)$ and a linear map $\psi$ from $\mathcal{SV}(\varepsilon)$ into itself such that $f(x,y)=\theta_1(y) D_1(x)+\theta_2(y)D_2(x)+\theta_3(y)D_3(x)+{\rm ad}(-\psi (y))(x)=\theta_1(y) D_1(x)+\theta_2(y)D_2(x)+\theta_3(y)D_3(x)+[x, \psi(y)]$. The proof is completed.
\end{proof}

Below, we discuss only the case in which $\varepsilon=0$, i.e., the twisted Schr\"{o}dinger-Virasoro Lie algebra. It is easy to verify that the method and result are also valid for the case in which $\varepsilon=\frac{1}{2}$.

\begin{lemma}\label{LMLN}
Let $\phi$ and $\psi$ be defined by Lemma \ref{phipsi}. Then, there are $\lambda,s_{0}^{(m)},e_{0}^{(m)}\in \mathbb{C}$ and a set $\Omega=\{\mu_k\in \mathbb{C}| k\in \mathbb{Z}\}$ with $|\Omega|<+\infty$ such that
\begin{eqnarray}
\phi(L_m)=\lambda L_m +\sum_{k\in \mathbb{Z}\setminus{\{-m\}}} \frac{\mu_k}{m+k}M_{m+k}+s_{0}^{(m)}M_0, \label{tang21}\\
\psi(L_m)=\lambda L_m +\sum_{k\in \mathbb{Z}\setminus{\{-m\}}} \frac{-\mu_k}{m+k}M_{m+k}+e_{0}^{(m)}M_0, \label{tang22}
\end{eqnarray}
for any $m\in \mathbb{Z}$.
\end{lemma}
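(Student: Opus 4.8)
The plan is to compute $f(L_m,L_n)$ in two ways, using the two expressions for $f$ provided by Lemma~\ref{phipsi}, and then to compare coefficients in the basis $\{L_i,Y_i,M_i\}$. First I would expand the (finitely supported) images as
\[
\phi(L_m)=\sum_i a_i^{(m)}L_i+\sum_i b_i^{(m)}Y_i+\sum_i s_i^{(m)}M_i,\qquad
\psi(L_m)=\sum_i \tilde a_i^{(m)}L_i+\sum_i \tilde b_i^{(m)}Y_i+\sum_i e_i^{(m)}M_i .
\]
Since $D_3(L_j)=0$ for every $j$, the $\rho_3$- and $\theta_3$-terms drop out when both arguments are taken from $\mathfrak{L}$, and using $D_1(L_j)=M_j$, $D_2(L_j)=jM_j$ together with the bracket relations one obtains two explicit expansions of $f(L_m,L_n)$: one with coefficients built from $a_i^{(m)},b_i^{(m)},s_i^{(m)},\rho_1(L_m),\rho_2(L_m)$, and one built from $\tilde a_i^{(n)},\tilde b_i^{(n)},e_i^{(n)},\theta_1(L_n),\theta_2(L_n)$. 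Equating the two and reading off the coefficient of each basis vector separately reduces the whole statement to the formal identities of Section~2.

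Comparing the coefficients of the $L$-basis vectors yields, after the index shift $i\mapsto n-m+i$, exactly relation~(\ref{abcd1}) with $k_i^{(m)}=a_i^{(m)}$ and $h_i^{(m)}=\tilde a_i^{(m)}$; Proposition~\ref{prop1} then gives $a_i^{(m)}=\tilde a_i^{(m)}=\delta_{m,i}\lambda$, i.e. the common $\mathfrak{L}$-component $\lambda L_m$. Comparing the coefficients of the $Y$-basis vectors produces relation~(\ref{abcd2}) with $t_i^{(m)}=b_i^{(m)}$ and $g_i^{(m)}=\tilde b_i^{(m)}$, so Proposition~\ref{prop2} forces $b_i^{(m)}=\tilde b_i^{(m)}=0$ and both images have no $\mathfrak{Y}$-component. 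These two steps are essentially bookkeeping once the expansions are in hand.

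The $M$-comparison is the delicate step and the main obstacle. Here the contributions are $\sum_i i\,s_i^{(m)}M_{i+n}+(\rho_1(L_m)+n\rho_2(L_m))M_n$ from the first expansion and $-\sum_i i\,e_i^{(n)}M_{m+i}+(\theta_1(L_n)+m\theta_2(L_n))M_m$ from the second, so the $\rho$- and $\theta$-terms contribute only to $M_n$ and $M_m$ respectively. Writing the coefficient of $M_{n+i}$ and separating according to whether the Kronecker deltas $\delta_{i,0}$ and $\delta_{i,m-n}$ fire, I would recover precisely the three relations of Proposition~\ref{prop3}: the generic range $i\neq 0,m-n$ gives~(\ref{mingtian1}), the value $i=0$ (with $m\neq n$) gives~(\ref{mingtian2}), and $i=m-n$ (with $m\neq n$) gives~(\ref{mingtian3}), while the overlap $m=n$ produces only a tautology consistent with the conclusion. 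The care needed is in getting the index shifts and the two isolated delta-terms exactly aligned with the hypotheses. Proposition~\ref{prop3} then supplies $\Omega=\{\mu_k\}$ with $s_{m+k}^{(m)}=-e_{m+k}^{(m)}=\mu_k/(m+k)$ for $k\neq -m$, together with $\rho_1(L_m)=\theta_1(L_m)=\mu_{-m}$ and $\rho_2(L_m)=\theta_2(L_m)=0$.

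Finally I would reassemble the components. Since relation~(\ref{jintian4}) constrains $s_i^{(m)}$ and $e_i^{(m)}$ only for $i\neq 0$, the coefficients $s_0^{(m)}$ and $e_0^{(m)}$ of $M_0$ remain free, which accounts for the extra summands in (\ref{tang21})--(\ref{tang22}); combining with $a_i^{(m)}=\delta_{m,i}\lambda$ and $b_i^{(m)}=0$ gives the stated formulas for $\phi(L_m)$ and $\psi(L_m)$. The finiteness $|\Omega|<+\infty$ is forced because each $\phi(L_m)$ lies in $\mathcal{SV}(0)$ and is therefore a finite combination of basis vectors, so $\mu_k/(m+k)$ can be nonzero for only finitely many $k$; as $\mu_k$ is independent of $m$, only finitely many $\mu_k$ are nonzero.
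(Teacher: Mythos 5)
Your proposal is correct and follows essentially the same route as the paper's own proof: expand $\phi(L_m)$ and $\psi(L_m)$ in the basis, equate the two expressions for $f(L_m,L_n)$ from Lemma~\ref{phipsi} (with the $\rho_3,\theta_3$ terms vanishing since $D_3(L_n)=0$), read off the $L$-, $Y$- and $M$-coefficients to obtain exactly (\ref{abcd1}), (\ref{abcd2}) and (\ref{mingtian1})--(\ref{mingtian3}), and then invoke Propositions \ref{prop1}--\ref{prop3} plus finiteness of the basis expansion to get $|\Omega|<+\infty$. Your explicit attention to the Kronecker-delta cases $i=0$, $i=m-n$ and the overlap $m=n$ is a slightly more careful bookkeeping of a point the paper passes over silently, but it is the same argument.
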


\begin{proof}
For any $n\in \mathbb{Z}$, let
\begin{eqnarray}
\phi(L_n)=\sum_{i\in \mathbb{Z}} k_{i}^{(n)}L_i+\sum_{i\in \mathbb{Z}} t_{i}^{(n)}Y_i+\sum_{i\in \mathbb{Z}} s_{i}^{(n)}M_i, \label{ee1}\\
\psi(L_{n})=\sum_{i\in \mathbb{Z}} h_{i}^{(n)}L_i+\sum_{i\in \mathbb{Z}} g_{i}^{(n)}Y_i+\sum_{i\in \mathbb{Z}} e_{i}^{(n)}M_i, \label{ee3}
\end{eqnarray}
where $k_{i}^{(n)}, t_{i}^{(n)},   s_{i}^{(n)}, h_{i}^{(n)}, g_{i}^{(n)},  e_{i}^{(n)} \in \mathbb{C}$ for every $i\in \mathbb{Z}$.
Therefore, we have
\begin{equation}\label{philmln}
[\phi (L_m), L_n]=\sum_{i\in \mathbb{Z}} (i-n)k_{i}^{(m)}L_{n+i}+\sum_{i\in \mathbb{Z}}(i-\frac{n}{2})t_{i}^{(m)}Y_{n+i}+\sum_{i\in \mathbb{Z}} i s_{i}^{(m)}M_{n+i},
\end{equation}
\begin{eqnarray}
[L_m,\psi(L_n)]&=&\sum_{i\in \mathbb{Z}} (m-i)h_{i}^{(n)}L_{m+i}+
\sum_{i\in \mathbb{Z}}(\frac{m}{2}-i)g_{i}^{(n)}Y_{m+i}-\sum_{i\in \mathbb{Z}} i e_{i}^{(n)}M_{m+i} \nonumber\\
&=&\sum_{i\in \mathbb{Z}} (2m-n-i)h_{n-m+i}^{(n)}L_{n+i}+
\sum_{i\in \mathbb{Z}}(\frac{3m}{2}-n-i)g_{n-m+i}^{(n)}Y_{n+i}\label{lmpsiln}\\
&&-\sum_{i\in \mathbb{Z}} (n-m+i) e_{n-m+i}^{(n)}M_{n+i}. \nonumber
\end{eqnarray}
By Lemma (\ref{phipsi}),
$$
f(L_m,L_n)=(\rho_1(L_m)+ n \rho_2(L_m))M_n+[\phi (L_m), L_n]=(\theta_1(L_n)+ m \theta_2(L_n))M_m+[L_m,\psi(L_n)].
$$
 This, together with (\ref{philmln}) and (\ref{lmpsiln}), implies that Equations (\ref{abcd1}),(\ref{abcd2}), (\ref{mingtian1}), (\ref{mingtian2}) and (\ref{mingtian3}) are satisfied.  By Propositions \ref{prop1}-\ref{prop3}, we know that $\phi(L_n)$ and $\psi(L_n)$ satisfy (\ref{tang21}) and (\ref{tang22}), respectively. Note that the expansion of $\phi(L_m)$ is finite sum for any integer $m$, so $\Omega$ must contain only finitely many nonzero numbers. In other words, $|\Omega|<+\infty$.
 The proof is completed.
\end{proof}

As a corollary of Proposition \ref{prop3}, we obtain (\ref{jintian5}) and (\ref{jintian6});
namely, $\rho_1(L_m)=\theta_1(L_m)=\mu_{-m}$ and $\rho_2(L_m)=\theta_2(L_m)=0$. This allows us to describe $f(L_m,L_n)$ as follows.
\begin{eqnarray}
f(L_m,L_n)&=&(\rho_1(L_m)+ n \rho_2(L_m))M_n+[\phi (L_m), L_n] \nonumber\\
&=& \mu_{-m}M_n+\lambda[L_m,L_n]+\sum_{k\in \mathbb{Z}\setminus{\{-m\}}} \frac{\mu_k}{m+k}[M_{m+k},L_n] \nonumber\\
&=& \mu_{-m}M_n+\lambda[L_m,L_n]+\sum_{k\in \mathbb{Z}\setminus{\{-m\}}} \mu_k M_{m+n+k} \nonumber\\
&=& \lambda[L_m,L_n]+\sum_{k\in \mathbb{Z}} \mu_k M_{m+n+k}. \label{laop1}
\end{eqnarray}

\begin{lemma}\label{yform}
Let $\phi$ and $\psi$ be defined by Lemma \ref{phipsi}. Then, there are $c_0^{(n)}, r_0^{(n)}\in \mathbb{C}$ such that
$$
\phi(Y_n)=\lambda Y_n+c_0^{(n)}M_0,\ \ \psi(Y_n)=\lambda Y_n+r_0^{(n)}M_0, \ \forall n\in \mathbb{Z},
$$
where $\lambda$ is the same as in Lemma \ref{LMLN}. Furthermore,
$$
\rho_i(Y_n)=\theta_i(Y_n)=\rho_3(L_n)=\theta_3(L_n)=0, i=1,2,3,  \ \forall n\in \mathbb{Z}.
$$
\end{lemma}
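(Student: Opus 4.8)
The plan is to mirror the argument of Lemma \ref{LMLN}, but now pairing an $L$ against a $Y$. Expand $\phi(Y_n)=\sum_i a_i^{(n)}L_i+\sum_i b_i^{(n)}Y_i+\sum_i c_i^{(n)}M_i$ and $\psi(Y_n)=\sum_i p_i^{(n)}L_i+\sum_i q_i^{(n)}Y_i+\sum_i r_i^{(n)}M_i$, and evaluate $f(L_m,Y_n)$ through both identities of Lemma \ref{phipsi}. The key simplification is that $D_1,D_2$ annihilate $\mathfrak{Y}$ while $D_3(Y_n)=Y_n$, and that $[M_j,Y_n]=0$; hence, inserting the shape of $\phi(L_m)$ from Lemma \ref{LMLN}, the first identity collapses to the pure $\mathfrak{Y}$-element
\[
f(L_m,Y_n)=\rho_3(L_m)Y_n+\lambda\big(\tfrac{m}{2}-n\big)Y_{m+n},
\]
while the second identity produces $(\theta_1(Y_n)+m\theta_2(Y_n))M_m+[L_m,\psi(Y_n)]$.

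Next I would compare components of the two expressions for $f(L_m,Y_n)$. The $\mathfrak{L}$-part of $[L_m,\psi(Y_n)]$ must vanish, which forces $p_i^{(n)}=0$ for all $i$ (choose $m\neq i$). For the $\mathfrak{Y}$-part, at the source indices away from the two special values $i=n$ and $i=n-m$ one obtains relations of the form $(\tfrac{m}{2}-i)q_i^{(n)}=0$, exactly the situation of Proposition \ref{prop4}, giving $q_i^{(n)}=0$ for $i\neq n$; reading off the special positions then yields $q_n^{(n)}=\lambda$ and, crucially, $\rho_3(L_m)=0$ for all $m$ (the degenerate case $m=0$, where the two indices coincide, being handled by hand). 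Finally the $\mathfrak{M}$-part, absent on the left, forces $\theta_1(Y_n)+m\theta_2(Y_n)=0$ for all $m$ — hence $\theta_1(Y_n)=\theta_2(Y_n)=0$ — and $r_i^{(n)}=0$ for $i\neq 0$, leaving $\psi(Y_n)=\lambda Y_n+r_0^{(n)}M_0$. Running the symmetric computation on $f(Y_n,L_m)$, where $[Y_n,\psi(L_m)]=\lambda(n-\tfrac{m}{2})Y_{m+n}$ collapses the bracket in the same way, gives by the same three-fold comparison $\phi(Y_n)=\lambda Y_n+c_0^{(n)}M_0$, together with $\rho_1(Y_n)=\rho_2(Y_n)=0$ and $\theta_3(L_m)=0$.

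What remains are the two functionals $\rho_3(Y_n)$ and $\theta_3(Y_n)$, which never enter the $L$--$Y$ pairings because $D_3(L_m)=0$. To capture them I would pair two $Y$'s: using $\phi(Y_n)=\lambda Y_n+c_0^{(n)}M_0$ and $\psi(Y_p)=\lambda Y_p+r_0^{(p)}M_0$ already obtained, both identities of Lemma \ref{phipsi} evaluated at $(Y_n,Y_p)$ reduce to
\[
\rho_3(Y_n)Y_p+\lambda(n-p)M_{n+p}=\theta_3(Y_p)Y_n+\lambda(n-p)M_{n+p},
\]
so $\rho_3(Y_n)Y_p=\theta_3(Y_p)Y_n$. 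Choosing $p\neq n$ and using the linear independence of $Y_n,Y_p$ gives $\rho_3(Y_n)=0$ and $\theta_3(Y_p)=0$ for all $n,p$, completing the list of claimed identities.

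I expect the main obstacle to be the $\mathfrak{Y}$-component bookkeeping: one must carefully separate the generic source indices (where Proposition \ref{prop4} applies) from the two diagonal indices $i=n$ and $i=n-m$, and treat the degenerate value $m=0$ separately, since it is precisely the diagonal comparison that delivers $\rho_3(L_m)=0$ and $\theta_3(L_m)=0$, rather than merely the vanishing of the off-diagonal $Y$-coefficients. A secondary subtlety is recognizing that $\rho_3(Y_n)$ and $\theta_3(Y_n)$ are invisible to the $L$--$Y$ equations and require the extra $Y$--$Y$ pairing above.
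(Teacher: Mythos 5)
Your proposal is correct and follows essentially the same route as the paper: expand $\phi(Y_n),\psi(Y_n)$, equate the two expressions for $f(L_m,Y_n)$ componentwise (using Proposition \ref{prop4} for the $\mathfrak{Y}$-part and the arbitrariness of $m$ for the $\mathfrak{L}$- and $\mathfrak{M}$-parts), then repeat with $f(Y_m,L_n)$, and finally extract $\rho_3(Y_m)=\theta_3(Y_n)=0$ from the $Y$--$Y$ pairing via linear independence. If anything, your write-up is slightly more careful than the paper's, since you explicitly flag the degenerate case $m=0$ in identifying $\rho_3(L_m)=0$ and you correctly identify the excluded index as $i=n-m$ (the paper's Proposition \ref{prop4} states $i\neq m-n$, an immaterial typo).
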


\begin{proof}
For any $n\in \mathbb{Z}$, let
\begin{eqnarray}
\phi(Y_n)=\sum_{i\in \mathbb{Z}} a_{i}^{(n)}L_i+\sum_{i\in \mathbb{Z}} b_{i}^{(n)}Y_i+\sum_{i\in \mathbb{Z}} c_{i}^{(n)}M_i, \label{ee23}\\
\psi(Y_{n})=\sum_{i\in \mathbb{Z}} p_{i}^{(n)}L_i+\sum_{i\in \mathbb{Z}} q_{i}^{(n)}Y_i+\sum_{i\in \mathbb{Z}} r_{i}^{(n)}M_i, \label{ee33}
\end{eqnarray}
where $a_{i}^{(n)}, b_{i}^{(n)},   c_{i}^{(n)}, p_{i}^{(n)}, q_{i}^{(n)},  r_{i}^{(n)} \in \mathbb{C}$ for every $i\in \mathbb{Z}$. Due to Lemma \ref{LMLN}, by a simple computation, one has
\begin{equation}\label{p123}
[\phi (L_m), Y_n]=\lambda(\frac{m}{2}-n)Y_{m+n},
\end{equation}
and
\begin{eqnarray}\label{p124}
[L_m,\psi(Y_n)]&=&\sum_{i\in \mathbb{Z}} (m-i)p_{i}^{(n)}L_{m+i}+
\sum_{i\in \mathbb{Z}}(\frac{m}{2}-i)q_{i}^{(n)}Y_{m+i}-\sum_{i\in \mathbb{Z}} i r_{i}^{(n)}M_{m+i}.
\end{eqnarray}
By Lemma \ref{phipsi}, we have
\begin{equation}\label{p125}
\rho_3(L_m) Y_n +[\phi (L_m), Y_n]=(\theta_1(Y_n)+ m \theta_2(Y_n))M_m+[L_m,\psi(Y_n)].
\end{equation}
This, together with (\ref{p123}) and (\ref{p124}), yields that (\ref{jintian3}) holds and $(m-i)p_{i}^{(n)}= i r_{i}^{(n)}=0$. Proposition \ref{prop4} tells us that $q_i^{(n)}=0$ for all $i,n\in \mathbb{Z}$ with $i\neq n$. We also see that  $p_{i}^{(n)}=r_{j}^{(n)}=0$ for all $i,j,n\in \mathbb{Z}$ with $j\neq 0$.  Again using (\ref{p123}), (\ref{p124}) and (\ref{p125}), we deduce that $\lambda(\frac{m}{2}-n)=(\frac{m}{2}-n)q_{n}^{(n)}$ and thus $\lambda=q_{n}^{(n)}$.
At the same time, it is easy to see that $\rho_3(L_m)=\theta_1(Y_n)+ m \theta_2(Y_n)=0$ for all $m,n\in \mathbb{Z}$, which implies $\rho_3(L_m)=\theta_1(Y_n)= \theta_2(Y_n)=0$.  In addition, it is proved that
$$
\psi(Y_n)=\lambda Y_n+r_0^{(n)}M_0, \ \forall n\in \mathbb{Z}.
$$
Similarly, from Lemma \ref{phipsi} we obtain
\begin{equation*}
f(Y_m,L_n)=(\rho_1(Y_m)+n\rho_2(Y_m)) M_n +[\phi (Y_m), L_n]=\theta_3(L_n)Y_m+[Y_m,\psi(L_n)],
\end{equation*}
and thereby it follows that $\phi(Y_n)=\lambda Y_n+c_0^{(n)}M_0$ and $\rho_1(Y_n)=\rho_2(Y_n)=\theta_3(L_n)=0$ for all $n\in \mathbb{Z}$.
Finally, we have by Lemma \ref{phipsi} that
\begin{equation*}
f(Y_m,L_n)=\rho_3(Y_m)Y_n+[\phi (Y_m), Y_n]=\theta_3(Y_n)Y_m+[Y_m,\psi(Y_n)],
\end{equation*}
which yields that $\rho_3(Y_m)=\theta_3(Y_n)=0$.  The proof is completed.
\end{proof}

\begin{lemma}\label{mform}
Let $\phi$ and $\psi$ be defined by Lemma \ref{phipsi}. Then, there are $w_0^{(n)}, o_0^{(n)}\in \mathbb{C}$ such that
$$
\phi(M_n)=\lambda M_n+w_0^{(n)}M_0,\ \ \psi(M_n)=\lambda M_n+o_0^{(n)}M_0, \ \forall n\in \mathbb{Z},
$$
where $\lambda$ is the same as Lemma \ref{LMLN}. Furthermore,
$$
\rho_i(M_n)=\theta_i(M_n)=0,  i=1,2,3, \ \forall n\in \mathbb{Z}.
$$
\end{lemma}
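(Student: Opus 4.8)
The plan is to mirror the strategy of Lemmas \ref{LMLN} and \ref{yform}: expand $\phi(M_n)$ and $\psi(M_n)$ in the basis $\{L_i,Y_i,M_i\}$, evaluate $f$ on a well-chosen pair of basis vectors using both expressions of Lemma \ref{phipsi}, and compare the $\mathfrak{L}$-, $\mathfrak{Y}$- and $\mathfrak{M}$-components. The decisive simplification, already exploited in the previous lemmas, is that by Lemma \ref{LMLN} the images $\phi(L_m)$ and $\psi(L_n)$ equal $\lambda L_m$ (resp.\ $\lambda L_n$) plus a combination of $M_i$'s; since the $M_i$ are central against $\mathfrak{M}$ and $\mathfrak{Y}$, bracketing such an element against $M_n$ collapses to a single term. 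Combined with the vanishing of $\rho_3,\theta_3$ on $\mathfrak{L}$ and $\mathfrak{Y}$ from Lemma \ref{yform}, each side of every identity reduces to one term or to zero.

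First I would treat $\psi$ via the pair $f(L_m,M_n)$. Writing $\psi(M_n)=\sum_i x_i^{(n)}L_i+\sum_i y_i^{(n)}Y_i+\sum_i o_i^{(n)}M_i$, the first expression of Lemma \ref{phipsi} gives $f(L_m,M_n)=[\phi(L_m),M_n]$, because $D_1(M_n)=D_2(M_n)=0$, $D_3(M_n)=2M_n$ while $\rho_3(L_m)=0$; using $\phi(L_m)=\lambda L_m+(\text{terms in }M)$ and $[L_m,M_n]=-nM_{m+n}$ this is just $-\lambda n M_{m+n}$. The second expression gives $(\theta_1(M_n)+m\theta_2(M_n))M_m+[L_m,\psi(M_n)]$. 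Comparing $\mathfrak{L}$-components forces $(m-i)x_i^{(n)}=0$ for all $m$, hence $x_i^{(n)}=0$; comparing $\mathfrak{Y}$-components forces $(\tfrac{m}{2}-i)y_i^{(n)}=0$ for all $m$, hence $y_i^{(n)}=0$. Matching the $\mathfrak{M}$-components index by index then yields $o_n^{(n)}=\lambda$ for $n\neq 0$, $o_i^{(n)}=0$ for $i\neq 0,n$, and $\theta_1(M_n)=\theta_2(M_n)=0$, i.e. $\psi(M_n)=\lambda M_n+o_0^{(n)}M_0$. The computation for $\phi$ is the symmetric one, run through $f(M_m,L_n)$, and delivers $\phi(M_n)=\lambda M_n+w_0^{(n)}M_0$ together with $\rho_1(M_n)=\rho_2(M_n)=0$.

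It remains to show $\rho_3(M_n)=\theta_3(M_n)=0$, which the pairs above cannot detect. For this I would bring in the mixed brackets $f(M_m,Y_n)$ and $f(Y_m,M_n)$. Applying the first expression of Lemma \ref{phipsi} to $f(M_m,Y_n)$ gives $\rho_3(M_m)Y_n+[\phi(M_m),Y_n]$, and since $\phi(M_m)$ now lies in $\mathfrak{M}$ the bracket vanishes as $[\mathfrak{M},\mathfrak{Y}]=0$; the second expression gives $[M_m,\psi(Y_n)]$, which vanishes for the same reason together with $\theta_3(Y_n)=0$ from Lemma \ref{yform}. Hence $\rho_3(M_m)Y_n=0$ and so $\rho_3(M_m)=0$; the symmetric computation with $f(Y_m,M_n)$ produces $\theta_3(M_n)Y_m=0$, giving $\theta_3(M_n)=0$.

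I do not anticipate a genuine obstacle: once Lemmas \ref{LMLN} and \ref{yform} have pinned down $\phi,\psi$ on $\mathfrak{L}$ and $\mathfrak{Y}$ and killed $\rho_3,\theta_3$ on those summands, every bracket occurring is either a single term or zero, so the argument is pure coefficient bookkeeping. The only point requiring a little care is keeping the degenerate indices $m=0$ and $n=0$ straight, where the distinguished $M_0$-term and the $\lambda M_n$-term overlap; in those cases the asserted form is merely the statement that $\phi(M_0)$ and $\psi(M_0)$ are scalar multiples of $M_0$, which the same component comparison still delivers.
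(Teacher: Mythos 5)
Your proof is correct and follows essentially the same route as the paper: expand $\phi(M_n),\psi(M_n)$ in the basis, compare the two expressions of $f(L_m,M_n)$ and $f(M_m,L_n)$ from Lemma \ref{phipsi} using the simplifications supplied by Lemmas \ref{LMLN} and \ref{yform}, and read off coefficients componentwise. The only (harmless) deviation is the last step: the paper kills $\rho_3(M_m)$ and $\theta_3(M_n)$ by equating the two expressions of $f(M_m,M_n)$, whereas you use the mixed pairs $f(M_m,Y_n)$ and $f(Y_m,M_n)$; both arguments are immediate and equally valid.
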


\begin{proof} For any $n\in \mathbb{Z}$, let
\begin{eqnarray}
\phi(M_n)=\sum_{i\in \mathbb{Z}} u_{i}^{(n)}L_i+\sum_{i\in \mathbb{Z}} v_{i}^{(n)}Y_i+\sum_{i\in \mathbb{Z}} w_{i}^{(n)}M_i, \label{ee233}\\
\psi(M_{n})=\sum_{i\in \mathbb{Z}} d_{i}^{(n)}L_i+\sum_{i\in \mathbb{Z}} l_{i}^{(n)}Y_i+\sum_{i\in \mathbb{Z}} o_{i}^{(n)}M_i, \label{ee333}
\end{eqnarray}
where $u_{i}^{(n)}, v_{i}^{(n)},   w_{i}^{(n)}, d_{i}^{(n)}, l_{i}^{(n)},  o_{i}^{(n)} \in \mathbb{C}$ for every $i\in \mathbb{Z}$. Due to Lemma \ref{LMLN}, by a simple computation, one has
\begin{equation}\label{ad333}
[\phi (L_m), M_n]=-\lambda n M_{m+n},
\end{equation}
and
\begin{eqnarray}
[L_m,\psi(M_n)]&=&\sum_{i\in \mathbb{Z}} (m-i)d_{i}^{(n)}L_{m+i}+
\sum_{i\in \mathbb{Z}}(\frac{m}{2}-i)l_{i}^{(n)}Y_{m+i}-\sum_{i\in \mathbb{Z}} i o_{i}^{(n)}M_{m+i}. \label{ad334}
\end{eqnarray}
By Lemmas \ref{phipsi} and \ref{yform}, we have
$$f(L_m,M_n)=[\phi (L_m), M_n]=(\theta_1(M_n)+ m \theta_2(M_n))M_m+[L_m,\psi(M_n)].$$
This, together with (\ref{ad333}) and (\ref{ad334}), gives that $(m-i)d_{i}^{(n)}=(\frac{m}{2}-i)l_{i}^{(n)}=0$, $\lambda n= n o_{n}^{(n)}$ and
$$\theta_1(M_n)+ m \theta_2(M_n)=0.$$
From this, we deduce that $o_{n}^{(n)}=\lambda$ and $o_{j}^{(n)}=d_{i}^{(n)}=l_{i}^{(n)}=\theta_1(M_n)=\theta_2(M_n)=0$ for all $i,j,n\in \mathbb{Z}$ with $j\neq 0,n$.   Similarly, by considering $f(M_m,L_n)$, we obtain $w_{m}^{(m)}=\lambda$ and $w_{j}^{(m)}=u_{i}^{(m)}=v_{i}^{(m)}=\rho_1(M_m)=\rho_2(M_m)=0$ for all $i,j,m\in \mathbb{Z}$ with $j\neq 0,m$.

Finally,  based on $f(M_m, M_n)= \rho_3(M_m)+[\phi(M_m),M_n]=\theta_3(M_n)+[M_m,\psi(M_n)]$, we deduce that $ \rho_3(M_m)=\theta_3(M_m)=0$ for all $m\in \mathbb{Z}$. The proof is completed.
\end{proof}

For convenience, we define a bilinear map on $\mathcal{SV}(\varepsilon)$ as follows.

\begin{definition} \label{taa}
Let $\Omega=\{\mu_i\in \mathbb{C}|i\in \mathbb{Z}\}$ be a set satisfying $|\Omega|<+\infty$, i.e., $\Omega$ only contains finitely many nonzero numbers. For such an $\Omega$, we define a bilinear map $\chi_\Omega: \mathcal{SV}(\varepsilon)\times \mathcal{SV}(\varepsilon)\rightarrow \mathcal{SV}(\varepsilon)$ given by
\begin{equation}
\chi_\Omega(L_m,L_n)=\sum_{i\in \mathbb{Z}}\mu_i M_{m+n+i}
\end{equation}
for all $m,n\in \mathbb{Z}$ and $\chi_\Omega(x,y)=0$ if either of $x, y$ is contained in $\mathfrak{Y}\cup\mathfrak{M}$.
\end{definition}
 It is easy to verify that $\chi_\Omega$ is a non-inner and non-skewsymmetric biderivation of $ \mathcal{SV}$. In fact, Example \ref{lihai} is just the case of  $f=\chi_\Omega$ with $\Omega=\{\mu_{2016}=2017, \mu_i=0|i\neq 2016\}$.  In contrast, we see that $\chi_\Omega(x,y)$ is symmetric. Thus, if $\chi_\Omega(x,y)$ is also skew-symmetric, then $\chi_\Omega(x,y)=0$.

We now state our main result as follows.

\begin{theorem}\label{maintheo}
$f$ is a biderivation of $\mathcal{SV}(\varepsilon)$ if and only if there are $\lambda\in \mathbb{C}$ and a set $\Omega=\{\mu_i\in \mathbb{C}|i\in \mathbb{Z}\}$ with $|\Omega|<+\infty$ such that
\begin{equation}\label{adform}
f(x,y)=\lambda [x,y]+\chi_\Omega(x,y), \ \ \forall x,y\in \mathcal{SV}(\varepsilon),
\end{equation}
where $\chi_\Omega$ is given by Definition \ref{taa}.
\end{theorem}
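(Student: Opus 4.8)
The plan is to establish the two implications separately, with the ``if'' direction a short verification and the ``only if'' direction an assembly of the structural lemmas already proved. For the ``if'' direction I would observe that the defining conditions (\ref{2der}) and (\ref{1der}) are linear in $f$, so the biderivations of $\mathcal{SV}(\varepsilon)$ form a vector space. Since $\lambda[x,y]$ is an inner biderivation and $\chi_\Omega$ was already checked (just after Definition \ref{taa}) to be a biderivation, the sum $\lambda[x,y]+\chi_\Omega(x,y)$ is again a biderivation, completing that direction.

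For the ``only if'' direction I would take an arbitrary biderivation $f$ and feed the accumulated data into the representation of Lemma \ref{phipsi}, namely $f(x,y)=\rho_1(x)D_1(y)+\rho_2(x)D_2(y)+\rho_3(x)D_3(y)+[\phi(x),y]$. Lemmas \ref{LMLN}, \ref{yform} and \ref{mform} supply the explicit values of $\phi(L_m)$, $\phi(Y_n)$, $\phi(M_n)$ and record that $\rho_1(L_m)=\mu_{-m}$, $\rho_2(L_m)=\rho_3(L_m)=0$, and $\rho_i(Y_n)=\rho_i(M_n)=0$ for $i=1,2,3$. The task is then to evaluate $f$ on each ordered pair of basis vectors and compare with $\lambda[x,y]+\chi_\Omega(x,y)$.

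I would organize the computation into the nine cases determined by which of $\mathfrak{L},\mathfrak{Y},\mathfrak{M}$ contains each argument. The case $f(L_m,L_n)$ is already carried out in (\ref{laop1}), giving $\lambda[L_m,L_n]+\sum_{k}\mu_k M_{m+n+k}=\lambda[L_m,L_n]+\chi_\Omega(L_m,L_n)$. In each of the remaining eight cases at least one argument lies in $\mathfrak{Y}\cup\mathfrak{M}$, so $\chi_\Omega$ vanishes there and it suffices to prove $f(x,y)=\lambda[x,y]$. In all of them the three $D_i$-terms disappear: when $x\in\mathfrak{Y}\cup\mathfrak{M}$ every $\rho_i(x)$ is zero, and when $x=L_m$ one has $D_1(y)=D_2(y)=0$ for $y\in\mathfrak{Y}\cup\mathfrak{M}$ while $\rho_3(L_m)=0$. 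Thus $f(x,y)=[\phi(x),y]$, and writing $\phi(x)=\lambda x+(\text{an } \mathfrak{M}\text{-valued correction})$, the correction brackets trivially against the second argument: against $Y_n$ and $M_n$ by centrality of $\mathfrak{M}$, and against $L_n$ because the surviving correction is a multiple of $M_0$ and $[M_0,L_n]=0$. Hence only the $\lambda x$ term contributes and $f(x,y)=\lambda[x,y]$. By bilinearity these basis identities extend to all $x,y\in\mathcal{SV}(\varepsilon)$, which is (\ref{adform}); the set $\Omega$ so obtained satisfies $|\Omega|<+\infty$ by Lemma \ref{LMLN}.

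I expect the theorem to carry no genuinely new idea beyond the lemmas; the one point requiring care is the case-by-case bookkeeping, namely confirming in each of the nine cases that every $\mathfrak{M}$-correction in $\phi$ (and in the symmetric data coming from $\psi$) indeed brackets to zero. This rests entirely on the centrality of $\mathfrak{M}$ against $\mathfrak{Y}$ and $\mathfrak{M}$ and on the vanishing coefficient in $[L_n,M_0]=0$, so the main obstacle is organizational rather than conceptual.
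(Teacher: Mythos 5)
Your proposal is correct and follows essentially the same route as the paper: both directions are handled identically (the ``if'' by closure of biderivations under the given sum, the ``only if'' by assembling Lemmas \ref{phipsi}, \ref{LMLN}, \ref{yform}, \ref{mform}, using (\ref{laop1}) for the $\mathfrak{L}$--$\mathfrak{L}$ case, checking the remaining basis pairs, and extending by bilinearity). Your uniform bookkeeping argument---that in the eight remaining cases the $D_i$-terms vanish and the $\mathfrak{M}$-valued corrections in $\phi$ bracket to zero via $[\mathfrak{M},\mathfrak{Y}]=[\mathfrak{M},\mathfrak{M}]=0$ and $[M_0,L_n]=0$---is exactly the content behind the paper's ``verify one by one,'' just spelled out more explicitly.
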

\begin{proof}
 The ``if'' direction is easy to verify. We now prove the ``only if'' direction.

 We now assume that  $f$ is a biderivation of $\mathcal{SV}(\varepsilon)$.  Thus, there are $\lambda\in \mathbb{C}$ and a set $\Omega=\{\mu_i\in \mathbb{C}|i\in \mathbb{Z}\}$ with $|\Omega|<+\infty$ such that Lemmas \ref{LMLN}, \ref{yform} and \ref{mform} are  established.  We already know from (\ref{laop1}) that
 $$f(L_m,L_n)=\lambda [L_m,L_n]+\chi_\Omega(L_m,L_n), \ \forall m,n\in \mathbb{Z}.$$
 From our well-known results, it is easy to see that
 \begin{eqnarray*}
 f(L_m,Y_n)=[L_m,\psi(Y_n)]=\lambda [L_m,Y_n]=\lambda [L_m,Y_n]+\chi_\Omega(L_m,Y_n)
 \end{eqnarray*}
 because $\chi_\Omega(L_m,Y_n)=0$. Similarly, we are able to verify one by one that
  \begin{eqnarray*}
 f(x,y)=\lambda [x,y]+\chi_\Omega(x,y)
 \end{eqnarray*}
 for all $x\in \{L_m,Y_m,M_m\}$ and $y\in \{L_n,Y_n,M_n\}$.  For the more general vectors $x,y\in \mathcal{SV}(\varepsilon)$, suppose that
 $$
 x=\sum_{i\in \mathbb{Z}}a_iL_i+\sum_{i\in \varepsilon+\mathbb{Z}}b_iY_i+\sum_{i\in \mathbb{Z}}c_i M_i, \ \  y=\sum_{i\in \mathbb{Z}}k_iL_i+\sum_{i\in \varepsilon+\mathbb{Z}}s_iY_i+\sum_{i\in \mathbb{Z}}t_i M_i.
 $$
 Note that $f(x,y), \lambda[x,y]$ and $\chi_\Omega(x,y)$ are all bilinear with respect to $x,y$, thus, the required results are easy to prove.
\end{proof}

The following corollary is a direct result of this theorem and has already been proved by \cite{WD1} when $\varepsilon=0$.
\begin{corollary}
Every skewsymmetric biderivation of $\mathcal{SV}(\varepsilon)$ is inner.
\end{corollary}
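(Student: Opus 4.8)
The plan is to extract this as an immediate consequence of Theorem \ref{maintheo}, which already carries all the structural content. First I would let $f$ be an arbitrary skewsymmetric biderivation of $\mathcal{SV}(\varepsilon)$ and apply Theorem \ref{maintheo} to produce $\lambda\in\mathbb{C}$ and a finite set $\Omega=\{\mu_i\in\mathbb{C}\mid i\in\mathbb{Z}\}$ with
\begin{equation*}
f(x,y)=\lambda[x,y]+\chi_\Omega(x,y),\qquad \forall\,x,y\in\mathcal{SV}(\varepsilon).
\end{equation*}
The goal is then to show that the hypothesis of skewsymmetry forces $\lambda[x,y]$ to be the whole story, i.e.\ that $\chi_\Omega$ drops out.

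The key step is to play the intrinsic symmetry of $\chi_\Omega$ against the assumed skewsymmetry of $f$. Since the Lie bracket satisfies $[x,y]=-[y,x]$, the inner part $\lambda[x,y]$ is skewsymmetric; combined with $f(x,y)=-f(y,x)$, this forces the difference $\chi_\Omega(x,y)=f(x,y)-\lambda[x,y]$ to be skewsymmetric as well. On the other hand, Definition \ref{taa} shows that $\chi_\Omega$ is symmetric: the defining formula $\chi_\Omega(L_m,L_n)=\sum_{i\in\mathbb{Z}}\mu_i M_{m+n+i}$ is visibly unchanged under swapping $m$ and $n$, and $\chi_\Omega$ vanishes as soon as either argument lies in $\mathfrak{Y}\cup\mathfrak{M}$, so $\chi_\Omega(x,y)=\chi_\Omega(y,x)$ for all $x,y$ (as already remarked just before the theorem). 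Thus $\chi_\Omega$ is simultaneously symmetric and skewsymmetric; over the characteristic-zero field $\mathbb{C}$ this yields $2\chi_\Omega(x,y)=\chi_\Omega(x,y)+\chi_\Omega(y,x)=0$, hence $\chi_\Omega\equiv 0$. Consequently $f(x,y)=\lambda[x,y]$ for all $x,y$, which is exactly an inner biderivation.

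There is no genuine obstacle here: the entire difficulty has been absorbed into Theorem \ref{maintheo}, and the corollary reduces to the elementary fact that a bilinear map which is at once symmetric and skewsymmetric must vanish in characteristic $\neq 2$. The only point requiring a line of checking is the symmetry of $\chi_\Omega$, and that is transparent from its defining formula, so I expect the proof to be essentially a one-paragraph deduction.
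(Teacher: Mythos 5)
Your proposal is correct and follows exactly the paper's intended argument: the paper notes just before Theorem \ref{maintheo} that $\chi_\Omega$ is symmetric, hence vanishes if it is also skewsymmetric, and the corollary is then stated as a direct consequence of the decomposition $f(x,y)=\lambda[x,y]+\chi_\Omega(x,y)$. Your write-up merely fills in the same one-paragraph deduction the paper leaves implicit.
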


\section{Commutative post-Lie algebra structure}

In this section, we give an application of biderivations. It is well known that the skew-symmetric biderivation can be used to characterize the linear commuting map.  Based on this knowledge, the authors of \cite{WD1} give the forms of the linear commuting map on the Schr\"{o}dinger-Virasoro Lie algebra $\mathcal{SV}(0)$. The same result is valid for the case $\mathcal{SV}(\frac{1}{2})$; we omit the proof here. We shall discuss another application of biderivations: the commutative post-Lie algebra structure on $\mathcal{SV}$. Note that the precondition of this method is that the biderivation is assumed to be non-skewsymmetric.

Post-Lie algebras have been introduced by Valette in connection with the homology of partition
posets and the study of Koszul operads \cite{vela}. As noted in \cite{Burde1}, post-Lie algebras are a natural common generalization of pre-Lie algebras  and LR-algebras in the geometric context of nil-affine actions of Lie groups. Recently, many authors have studied various post-Lie algebras and post-Lie algebra structures  \cite{Burde2,Burde1,Mun,pan,tang2014}. In particular, the authors of \cite{Burde1} studies the commutative post-Lie algebra structure on Lie algebras and, by using the Levi decompositions, proved that any commutative post-Lie algebra structure on a (finite) perfect Lie algebra is trivial.  Note that the Schr\"{o}dinger-Virasoro Lie algebra is an infinite-dimensional perfect Lie algebra. We naturally want to know if this structure is also trivial on a commutative post-Lie algebra. By using our Theorems \ref{maintheo}, we can affirmatively answer this question. Let us recall the following definition of a commutative post-Lie algebra.

\begin{definition}\label{post}
Let $(L, [, ])$ be a complex Lie algebra. A  commutative post-Lie algebra structure on $L$ is a
$\mathbb{C}$-bilinear product $x\cdot y$ on $L$ satisfying the following identities:
\begin{eqnarray}
&& x \cdot y = y\cdot x, \label{post5}\\
&& [x, y] \cdot z =  x \cdot (y \cdot z)-y \cdot (x \cdot z), \label{post6}\\
&& x\cdot [y, z] = [x\cdot y, z] + [y, x \cdot z] \label{post7}
\end{eqnarray}
for all $x, y, z \in V$. We call $(L, [, ], \cdot)$ a commutative post-Lie algebra.
\end{definition}

The following lemma shows the connection between the commutative post-Lie algebra and a biderivation of the Lie algebra.

\begin{lemma}\label{postbide}
Suppose that $(L, [, ], \cdot)$ is a commutative post-Lie algebra. If we define a bilinear map $f : L\times L \rightarrow L$ by $f(x,y)=x\cdot y$ for all $x,y\in L$, then $f$ is a biderivation of $L$.
\end{lemma}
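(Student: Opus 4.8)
The plan is to verify directly that the product, viewed as the bilinear map $f(x,y)=x\cdot y$, satisfies the two defining identities (\ref{2der}) and (\ref{1der}) of a biderivation, drawing only on the commutative post-Lie axioms (\ref{post5})--(\ref{post7}). The key structural observation is that commutativity (\ref{post5}) makes the two arguments of $f$ interchangeable, so a single derivation-type axiom will be forced to supply the derivation property in \emph{both} slots.

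First I would dispose of the second identity (\ref{1der}), which reads $f(x,[y,z])=[f(x,y),z]+[y,f(x,z)]$. Substituting $f(x,y)=x\cdot y$ turns this into
$$
x\cdot[y,z]=[x\cdot y,z]+[y,x\cdot z],
$$
and this is literally axiom (\ref{post7}). Hence the second-component derivation property requires no computation at all.

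Next, for the first identity (\ref{2der}), namely $f([x,y],z)=[x,f(y,z)]+[f(x,z),y]$, I would rewrite it as the target equation
$$
[x,y]\cdot z=[x,y\cdot z]+[x\cdot z,y].
$$
The idea is to reduce this to (\ref{post7}) by symmetrizing. By (\ref{post5}) we have $[x,y]\cdot z=z\cdot[x,y]$; applying (\ref{post7}) with the first argument taken to be $z$ gives $z\cdot[x,y]=[z\cdot x,y]+[x,z\cdot y]$; finally using (\ref{post5}) once more to replace $z\cdot x$ by $x\cdot z$ and $z\cdot y$ by $y\cdot z$ produces exactly the required identity.

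I do not anticipate any genuine obstacle: the proof is essentially an unwinding of definitions, the only conceptual step being the swap via commutativity that converts (\ref{post7}) into the first-slot derivation law. It is worth remarking that axiom (\ref{post6}) plays no role in establishing this lemma; only (\ref{post5}) and (\ref{post7}) are used.
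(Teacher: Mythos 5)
Your proof is correct and follows essentially the same route as the paper: identity (\ref{1der}) is axiom (\ref{post7}) verbatim, and identity (\ref{2der}) is obtained by the same symmetrization trick --- swap via (\ref{post5}), apply (\ref{post7}), swap back. Your closing remark that (\ref{post6}) is never used is accurate and also implicit in the paper's argument.
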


\begin{proof}
For any $x,y,z\in L$, by (\ref{post5}) and (\ref{post7}), we deduce that
\begin{eqnarray*}
 f([x,y],z)=[x,y]\cdot z&=&z \cdot [x,y]=[z\cdot x,y]+[x, z\cdot y]\\
&=&[x\cdot z,y]+[x, y\cdot z]=[f(x,z),y]+[x,f(y,z)],\\
f(x,[y,z])=x\cdot [y,z]&=&[x\cdot y, z]+[y, x\cdot z]=[f(x,y),z]+[y,f(x,z)],
\end{eqnarray*}
which connects (\ref{2der}) and (\ref{1der}), as desired.
\end{proof}

We now give the main result of this section as follows.

\begin{theorem}\label{posttheo}
 Any commutative post-Lie algebra structure on the Schr\"{o}dinger-Virasoro Lie algebra $\mathcal{SV}(\varepsilon)$ is trivial. Namely, $x\cdot y=0$ for all $x,y\in \mathcal{SV}(\varepsilon)$.
\end{theorem}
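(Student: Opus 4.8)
The plan is to combine the two structural facts already in hand. By Lemma \ref{postbide}, the product of any commutative post-Lie structure on $\mathcal{SV}(\varepsilon)$, written as $f(x,y)=x\cdot y$, is a biderivation of $\mathcal{SV}(\varepsilon)$. Theorem \ref{maintheo} then determines it completely: there exist $\lambda\in\mathbb{C}$ and a finite set $\Omega=\{\mu_i\in\mathbb{C}\mid i\in\mathbb{Z}\}$ with
\[
x\cdot y=\lambda[x,y]+\chi_\Omega(x,y),\qquad \forall\, x,y\in\mathcal{SV}(\varepsilon).
\]
Since Theorem \ref{maintheo} encodes only the biderivation property, it remains to feed in the two post-Lie axioms that it does not see, namely commutativity (\ref{post5}) and the left-module identity (\ref{post6}); I will use the first to force $\lambda=0$ and the second to force $\chi_\Omega=0$.

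First I would exploit commutativity. The inner part $\lambda[x,y]$ is skew-symmetric, whereas $\chi_\Omega$ is symmetric (as observed after Definition \ref{taa}). Imposing $x\cdot y=y\cdot x$ therefore gives $\lambda[x,y]=-\lambda[x,y]$, i.e. $2\lambda[x,y]=0$ for all $x,y$. As $\mathcal{SV}(\varepsilon)$ is non-abelian --- for instance $[L_1,L_0]=L_1\neq 0$ --- this yields $\lambda=0$, so that $x\cdot y=\chi_\Omega(x,y)$.

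It remains to kill $\chi_\Omega$, and for this I would substitute $x=L_a,\,y=L_b,\,z=L_c$ into (\ref{post6}). The key feature of $\chi_\Omega$ is that it takes values in $\mathfrak{M}$ and vanishes whenever one argument lies in $\mathfrak{Y}\cup\mathfrak{M}$. Hence the right-hand side collapses: $L_b\cdot L_c=\sum_{i}\mu_i M_{b+c+i}\in\mathfrak{M}$ gives $L_a\cdot(L_b\cdot L_c)=0$, and likewise $L_b\cdot(L_a\cdot L_c)=0$. The left-hand side is
\[
[L_a,L_b]\cdot L_c=(a-b)\,\chi_\Omega(L_{a+b},L_c)=(a-b)\sum_{i\in\mathbb{Z}}\mu_i M_{a+b+c+i}.
\]
Choosing $a\neq b$ and invoking the linear independence of the $M_{a+b+c+i}$ forces every $\mu_i=0$, so $\chi_\Omega=0$ and therefore $x\cdot y=0$ identically.

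The substantive work is already contained in Theorem \ref{maintheo}, so I do not expect a serious obstacle; the computations above are routine. The one point demanding a deliberate choice is the selection of test elements in (\ref{post6}): all three slots must be filled by elements of $\mathfrak{L}$ so that the inner products $y\cdot z$ and $x\cdot z$ land in $\mathfrak{M}$, the region where $\chi_\Omega$ is identically zero, thereby annihilating the entire right-hand side and isolating the coefficients $\mu_i$ on the left. Any substitution placing an element of $\mathfrak{Y}\cup\mathfrak{M}$ in the first two slots would also kill the left-hand side and yield no information, so this choice is exactly what makes the symmetric part $\chi_\Omega$ (which the skew-symmetric argument of the previous corollary cannot reach) genuinely vanish.
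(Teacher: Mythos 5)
Your proposal is correct and follows essentially the same route as the paper's own proof: reduce via Lemma \ref{postbide} and Theorem \ref{maintheo}, kill $\lambda$ using commutativity against the symmetry of $\chi_\Omega$, then kill $\chi_\Omega$ by plugging three elements of $\mathfrak{L}$ into (\ref{post6}) so that the right-hand side lands in the kernel of $\chi_\Omega$. The paper simply uses the specific choice $[L_2,L_1]\cdot L_3$ where you use general $L_a,L_b,L_c$ with $a\neq b$; the arguments are otherwise identical.
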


\begin{proof}
Suppose that $(\mathcal{SV}(\varepsilon), [, ], \cdot)$ is a commutative post-Lie algebra. By Lemma \ref{postbide} and Theorem  \ref{maintheo}, we know that there are $\lambda\in \mathbb{C}$ and a set $\Omega=\{\mu_i\in \mathbb{C}|i\in \mathbb{Z}\}$ with $|\Omega|<+\infty$ such that $x\cdot y=\lambda [x,y]+\chi_\Omega(x,y)$ for all $x,y\in \mathcal{SV}$, where $\chi_\Omega$ is given by Definition \ref{taa}. Because the product $\cdot$ is commutative, we have $\lambda [L_1,L_2]+\chi_\Omega(L_1,L_2)=\lambda [L_2,L_1]+\chi_\Omega(L_2,L_1)$, which implies $\lambda=0$.  By (\ref{post7}), we see that
$$
[L_2,L_1]\cdot L_3=L_2\cdot(L_1\cdot L_3)-L_1\cdot(L_2\cdot L_3).
$$
If there is $\mu_k\in \Omega$ such that $\mu_k\neq 0$, then it is easy to see that the left-hand side of the above equation contains an item $\mu_kM_{6+k}\neq 0$, whereas the right-hand side is equal to zero, which is a contradiction. Thus, we have $\Omega=\{0\}$, i.e., $\mu_i=0$ for any $i\in \mathbb{Z}$. In other words, $\chi_\Omega=0$.
That is, $x\cdot y=0$ for all $x,y\in \mathcal{SV}(\varepsilon)$.
\end{proof}

The proof of Theorem \ref{posttheo} implies that we are able to characterize the commutative post-Lie algebra structure on a Lie algebra $L$ if we know the forms of the biderivation of $L$.  However, the precondition is that the biderivation is assumed to be non-skewsymmetric.

\section{ACKNOWLEDGMENTS}
This work is supported in part by National Natural Science Foundation of China (Grant No. 11171294), Natural Science
Foundation of Heilongjiang Province of China (Grant No. A2015007), and the fund of Heilongjiang Education Committee (Grant No. 12531483).

\end{document}